\def\url@leostyle{%
 \@ifundefined{selectfont}{\def\UrlFont{\sf}}{\def\UrlFont{\scriptsize\ttfamily}}} \makeatother\urlstyle{leo}
\newtheorem{theorem}{Theorem}
\newtheorem{proposition}[theorem]{Proposition}
\newtheorem{lemma}[theorem]{Lemma}
\newtheorem{corollary}[theorem]{Corollary}
\theoremstyle{definition}
\newtheorem{example}[theorem]{Example}
\theoremstyle{remark}
\newtheorem{remark}[theorem]{Remark}
\numberwithin{equation}{section}
\numberwithin{theorem}{section}
\definecolor{Red}{rgb}{0.9,0,0.0}
\definecolor{Blue}{rgb}{0,0.0,1.0}
\def\cA{\mathcal{A}}
\def\cD{\mathcal{D}}
\def\cF{\mathcal{F}}
\def\cH{\mathcal{H}}
\def\cN{\mathcal{N}}
\def\bE{\mathbb{E}}
\def\bN{\mathbb{N}}
\def\bP{\mathbb{P}}
\def\bR{\mathbb{R}}
\def\sF{\mathscr{F}}
\def\mV{\mathsf{V}}
\newcommand{\wt}{\widetilde}
\newcommand{\wh}{\widehat}
\newcommand{\set}[1]{\{#1\}}            
\renewcommand{\mid}{\;|\;}              
\newcommand{\Mid}{\;\Big | \;}          
\newcommand{\norm}[1]{ \| #1 \| }       
\DeclareMathOperator{\dif}{d \!}        
\DeclareMathOperator{\Var}{Var}          
\title{A note on parameter estimation for discretely sampled SPDEs}
\author{
    Igor Cialenco \\[-0.3ex]
    \url{cialenco@iit.edu}  \\[-0.9ex]
    \url{http://math.iit.edu/\~igor}
 \and
    Yicong Huang \\[-0.3ex]
    \url{yhuang37@hawk.iit.edu}  
 \and \\[-1.9ex]
        {\footnotesize Department of Applied Mathematics, Illinois Institute of Technology} \\
        {\footnotesize 10 W 32nd Str, REC Room 220, Chicago, IL 60616, USA}\\
        }
\date{ {\small  
First Circulated: October 4, 2017 \\
This Version: May 30, 2019 \\[0.5em] 
Forthcoming in Stochastics and Dynamics
}}
\begin{document}

\maketitle

\vspace{-2em}

\smallskip

{\footnotesize
\begin{tabular}{l@{} p{350pt}}
  \hline \\[-.2em]
  \textsc{Abstract}:    \ & We consider a parameter estimation problem  for one dimensional stochastic heat equations, when data is sampled discretely in time or spatial component.  We prove that, the real valued parameter next to the Laplacian (the drift), and the constant parameter in front of the noise (the volatility) can be consistently estimated under somewhat surprisingly minimal information. Namely, it is enough to  observe the solution at a fixed time and on a discrete spatial grid, or at a fixed space point and at discrete time instances of a finite interval, assuming that the mesh-size goes to zero. The proposed estimators have the same form and asymptotic properties regardless of the nature of the domain - bounded domain or whole space. The derivation of the estimators and the proofs of their asymptotic properties are based on computations of power variations of some relevant stochastic processes.  We use elements of Malliavin calculus to establish the asymptotic normality properties in the case of bounded domain. We also discuss the joint estimation problem of the drift and volatility coefficient. We conclude with some numerical experiments that illustrate the obtained theoretical results.
          \\[0.5em]
\textsc{Keywords:}      \ &  p-variation, power variation, statistics for SPDEs, discrete sampling, stochastic heat equation, inverse problems for SPDEs, Malliavin calculus.   \\
\textsc{MSC2010:}       \ & 60H15, 35Q30, 65L09 \\[1em]
  \hline
\end{tabular}
}

\section{Introduction}
Consider the following (parabolic) Stochastic Partial Differential Equations (SPDEs)
\begin{equation}\label{eq:Abstract}
\dif u(t) =( \theta \mathcal{A}_1 + \cA_0)u(t) \dif t + \sigma (\mathcal{M} u(t) + g(t))\dif W(t) \, ,
\end{equation}
where $\cA_0, \mathcal{A}_1, \mathcal{M}$ are some (linear or nonlinear) operators acting in suitable Hilbert spaces,
$g$ is an adapted vector-valued function, $W$ is a  cylindrical Brownian motion, and $\theta$ and $\sigma$ are unknown parameters (to be estimated) belonging to a subset of real line. Implicitly we will assume that \eqref{eq:Abstract} is parabolic and admits a unique solution, although usually this has to be established on a case by case basis.

Major part of the existing literature on statistical inference for SPDEs (estimating $\theta$ and $\sigma$) lies within the spectral approach, where it is assumed that one path of the first $N$ Fourier modes of the solution is observed continuously  over a finite interval of time.
In this case, the coefficient $\sigma$ can be determined explicitly and exactly, similar to the case of finite dimensional diffusions, by employing quadratic variation type arguments, and due to the fact that a path is observed continuously in time. A general method of estimating $\theta$ is to construct Maximum Likelihood Estimators (MLEs) based on the information revealed by the first $N$ Fourier modes, and prove that these estimators satisfy the desired statistical properties, such as consistency, asymptotic normality, and efficiency, as $N$ increases. For MLE based estimators applied to nonlinear SPDEs see for instance \cite{IgorNathanAditiveNS2010}. For other type of estimators, assuming the same observation scheme, see \cite{CialencoGongHuang2016}.
We refer the reader to the monograph \cite[Chapter~6]{LototskyRozovsky2017Book} and recent survey paper \cite{Cialenco2018} for a comprehensive overview of literature on statistical inference for SPDEs. Beyond spectral approach, the literature on parameter estimation for SPDEs is limited, and only few papers are devoted to discretely sampled SPDEs \cite{PiterbargRozovskii1997,Markussen2003,PospivsilTribe2007}. The main goal of this note is to contribute to these efforts and study the parameter estimation problem for parabolic SPDEs, when data is sampled discretely in physical domain. It has to be mentioned that by the time this manuscript was moving through the review process, several works appeared that study a similar problem, albeit by quite different methodologies. Simultaneously and independently of the present work, in \cite{BibingerTrabs2017,BibingerTrabs2019} the authors consider a second order linear parabolic SPDE on a bounded domain and driven by an additive noise, and study the problem of estimating the volatility coefficient, or integrated volatility in the semi-parametric setup, assuming that the solution is sampled on a discrete time and space grid. Using mixing theory approach, the authors prove consistency and asymptotic normality of the proposed estimators when the time and/or space mesh size goes to zero. On the other hand, in \cite{Chong2019}, the author studies  the analogues problem for similar equations but on whole space, by using methods rooted in the statistical inference for semi-martingale, and proving the asymptotic properties of the estimators when the time mesh vanishes.
One way to deal with discretely sampled data, is to discretize or approximate the MLEs using the available discrete data, and show that the statistical properties are preserved. This approach is addressed in \cite{CialencoDelgado-VencesKim2019}, where the authors study the drift estimation problem when the Fourier coefficients are observed at discrete time points. On the other hand, if we assume that the solution itself is observed at some space-time grid points, one needs to approximate additionally the Fourier modes. To best of our knowledge, a rigourous asymptotic analysis of this idea is still to be done.

In this paper we consider the stochastic heat equation, in dimension one, driven by an additive space-time noise, and assume that the solution $u$ is observed at some discrete space-time points. We do not rely on spectral approach, but rather derive some suitable representations of the solution to obtain the corresponding estimators. The major focus of the paper is to find consistent and asymptotically normal estimators for $\theta$ and/or $\sigma$ by using minimal amount of information. The main findings can be summarised as follows:
\begin{itemize}
  \item The drift $\theta$ or volatility $\sigma$ can be estimated assuming that the solution is observed just at one  (interior) space point and at discrete time points of a finite time interval, with the time mesh-size going to zero. Similarly, to estimate $\theta$ or $\sigma$ it is enough
      to observe the solution at one time instant and discretely on a spacial grid of a finite interval, with mesh diameter going to zero.
  \item For both sampling schemes the estimators are consistent and asymptotically normal, yielding a rate of convergence $1/\sqrt{n}$, where $n$ is the number of points in the grid.
  \item Both, the bounded domain or the whole space are considered. Due to the local nature of the estimators, they remain  the same regardless of the shape of the the domain, and exhibit the same asymptotic properties.
  \item We derive consistent joint estimators for $\theta$ and $\sigma$.
  \item New useful representation of the solution for the case of bounded domain are obtained.
\end{itemize}
The key idea of the proposed method is based  on an intuitively clear observation: the $p$-variation (or the power variation) of a stochastic process is invariant with respect to smooth perturbations. Hence, if the $p$-variation of a process $X$ can be computed by an explicit formula, and the parameter of interest enters non-trivially into this formula, one can derive consistent estimators of this parameter. However, since the $p$-variation of the perturbed process $X+Y$ remains the same, given that $Y$ is smooth enough, then the same estimator remains consistent assuming that $X+Y$ is observed. Analogous arguments remain valid for asymptotic normality property. The formal result is presented in Section~\ref{sec:setup}.
Thus, it remains to find suitable representations of the solution $u$ as a sum of two processes, which itself is an interesting problem. As already mentioned, we focus our study on two sampling schemes. In Section~\ref{sec:FixedX} we study the sampling scheme with fixed one space point and sampling discretely in time, for both bounded and unbounded domain. Section~\ref{sec:fixedTime} is dedicated to observations at one time instance and discrete space sampling. The case of the whole space is easiest to deal with, thanks to ready available representations of the solution; see \cite[Section~3]{Khoshnevisan2014BookSBMS} for details. It turns out that  for any fixed instance of time $t>0$, the solution as a function of $x\in\bR$ can be represented as a scaled two-sided Brownian motion plus a smooth process. Similarly, if we fix a spacial point, then the solution is a smoothly perturbed scaled fractional Brownian motion. Similar estimators where studied in \cite{PospivsilTribe2007} where the authors considered the heat equation on $\bR$ driven by a  multiplicative noise, and prove consistency by  different methods from ours.  The case of bounded domain is more intricate, due to lack of results on the representations of the solution. In Proposition~\ref{th:bounedFixedX} we prove that the solution can be represented as a sum of a smooth process and a zero-mean Gaussian process with known finite fourth variation. In contrast to the existing works, we use elements of Malliavin  calculus, as well as a version of the central limit theorem from \cite{NualartOrtiz2008}, to establish a central limit type theorem for the fourth variation of the solution. Consequently, we derive weakly consistent estimators for $\theta$ and $\sigma$, and prove their asymptotic normality.
Similar methodology of using  Malliavin  calculus technics to establish central limit theorem can be found in \cite{Corcuera2012}, although applied to similar processes but with  a simpler covariance structure. The case of bounded domain and fixed time is dealt by using Karhunen--Lo\`eve type expansions.

The importance of the chosen two sampling schemes is twofold. First, note that using existing methods based on spectral approach, to estimate consistently the drift $\theta$ the solution has to be observed (discretely or continuously) on entire domain and over a finite interval of time. In contrast, the results obtained here guarantee consistent estimation of both drift and volatility under significantly further information, revealing an important property of the statistical experiment, which essentially is exploiting the singularity of the probability measures generated by the solution for different values of the parameters. Secondly, in many practical applications the solution indeed is observed only at some a priori specified space points and at high time-frequency; e.g. temperature of a heated body, velocity of a turbulent flow, instantaneous forward rates where the space variable corresponds to time until maturity. On the other hand, to incorporate the additional information of observing the solution at several space points and discretely in time, or more generally by observing the solution at a discrete space-time grid, it is enough to take the (weighted) average of the proposed estimators; see Section~\ref{sec:space-time}. Finally, using a combination of the two sampling scheme (one fixed space point, and one fixed time point), we develop novel joint estimators that allow to find simultaneously $\theta$ and $\sigma$; see Section~\ref{sec:space-time}. Consistency of such estimators follows from the main results, while the asymptotic normality remains an open problem.

We conclude the paper with several numerical examples that validate the obtained theoretical results; see Section~\ref{sec:numerical}.
To streamline the presentation, some of the proofs and auxiliary technical results are moved to Appendix~\ref{appendix:auxiliaryResults}.

\section{Setup of the problem and preliminary results}\label{sec:setup}

Let $(\Omega,\sF,\set{\sF_t}_{t\geq 0},\bP)$ be a stochastic basis satisfying the usual assumptions, and let $G$ be either a bounded domain in $\bR$, say $G=[0,\pi]$ or the whole real line $G=\bR$. We consider the following stochastic partial differential equation on $H = L^2(G)$
\begin{equation}\label{eq:mainSPDE}
\begin{cases}
\dif u(t,x) =\theta u_{xx}(t,x)\dif t+\sigma \dif W(t,x),\quad x\in G,\quad t>0, \\
 u(0,x) = 0,
\end{cases}
\end{equation}
where  $\theta,\sigma$ are some positive constants, and $W(t,x)$ is a space-time white noise, namely a zero mean Gaussian field with covariance structure $\bE[W(t,x)W(s,y)]=\min(x,y)\min(t,s)$ for any $x,y\in G, \ t,s\geq0$. For the case of bounded domain, $G=[0,\pi]$, we also assume zero boundary conditions $u(t,0)=u(t,\pi)=0, \ t>0$. It is well known that the solution to \eqref{eq:mainSPDE} exists and is unique \cite{ChowBook,LototskyRozovsky2017Book}.

As usual, everywhere below, all equalities and inequalities between random variables, unless otherwise noted, will be  understood in the $\bP$-a.s. sense. The notations $\xrightarrow[]{\cD}$ will be used for convergence in distribution, while $\xrightarrow[]{\bP}$ or $\bP\!-\!\lim$ will stand for convergence in probability.

We assume that $\theta\in\Theta\subset (0,+\infty)$ and $\sigma\in\mathbf{S}\subset (0,+\infty)$ are the (unknown) parameters of interest.
The main focus of this work are the following sampling schemes\footnote{For simplicity of writing, we assume that the sampling points form a uniform grid. Generally speaking most of the results hold true assuming only that the mesh-size of the grid goes to zero, and with some of the `almost sure convergence' replaced with `convergence in probability'.}:
\begin{enumerate}[(A)]
  \item \textit{Fixed space and discrete time.}  For a fixed  $x$ from the interior of $G$, and given time interval $[c,d]\subset(0,+\infty)$, the solution $u$ is observed at points $\{(t_{i},x), \, i=1,\ldots,n\}$, where $t_{i}:=c+ (d-c)i/n,\ i=0,1,\ldots,n$.
  \item \textit{Fixed time and discrete space.} For a fixed instant of time $t>0$, and given interval $[a,b]\subset G$, the solution $u$ is observed at points  $(t,x_j), \ j=1,\ldots,m$, with $x_{j}=a+(b-a)j/m,\quad j=0,1,\ldots,m$.

\end{enumerate}
The main goal of this paper is to derive consistent estimators for the parameters $\theta$ and $\sigma$ under these sampling schemes, and to study the asymptotic properties of these estimators. In addition to these statistical experiments, we also investigate the estimation of $\theta$ and $\sigma$ when the solution is sampled at space-time grid points. Moreover, using the specific structure of the original estimators under sampling scheme (A) and (B) we are able to derive joint estimators for $\theta$ and $\sigma$ by using the measurements of the solution once by sampling scheme (A) and once by sampling scheme (B).

In what follows, we will use the notation  $\Upsilon^m(a,b)=\set{a_j \mid a_j= a+(b-a)j/m,\ j=0,1,\ldots,m}$ for the uniform partition of size $m$ of a given  interval $[a,b]\subset\bR$. For a given stochastic process $X$ on some interval $[a,b]$, and $p\geq 1$, we will denote by $\mV^p_m(X; [a,b])$ the sum
$$
\mV^p_m(X; [a,b]) := \sum_{j=1}^{m} |X(t_j) - X(t_{j-1})|^p,
$$
where $t_j\in\Upsilon^m(a,b)$. Correspondingly,
\begin{align*}
\mV^p(X; [a,b]) &:=  \lim_{m\to\infty}\mV^p_m(X; [a,b]), \quad \bP-\textrm{a.s.}, \\
\mV^p_\bP(X; [a,b]) &:=  \bP\!-\!\lim_{m\to\infty}\mV^p_m(X; [a,b]),
\end{align*}
will denote the $p$-variation of $X$ on $[a,b]$, in $\bP$-a.s. sense and respectively in probability. If no confusions arise, we will simply write $\mV^p(X)$, and $\mV^p_m(X)$ instead of
$\mV^p(X; [a,b])$ and $\mV^p_m(X; [a,b])$; same applies to $\mV^p_\bP(X)$.

The next result shows that the $p$-variation is invariant with respect to smooth perturbations.
\begin{proposition}\label{prop:QVarSmoothPert}
  Let $X(t),Y(t), \, t\in[a,b]$, be stochastic processes with continuous paths, and assume that the process $Y$ has $C^1[a,b]$ sample paths, and
 there exists $p>1$, such that $0<\mV^p(X)<\infty$.  Then,
\begin{equation}\label{eq:SamePVarSmoothPert}
  \mV^p (X+Y; [a,b]) = \mV^p(X; [a,b]).
\end{equation}
Similarly, if $0<\mV^p_\bP(X)<\infty$, then
\begin{equation}\label{eq:SamePVarSmoothPertbP}
	\mV_\bP^p (X+Y; [a,b]) = \mV_\bP^p(X; [a,b]).
\end{equation}
 If in addition, there exist $\alpha, \sigma_0>0$ such that, $\alpha+1/p <1$, 
 \begin{equation}\label{eq:CLTSmoothPert}
   n^\alpha \left(  \mV^p_n(X; [a,b])  - \mV^p(X; [a,b]) \right) \xrightarrow[n\to\infty]{\cD}\cN(0,\sigma^2_0),
 \end{equation}
 then
 \begin{equation}\label{eq:CLTSmoothPert2}
   n^\alpha \left(  \mV^p_n(X+Y; [a,b])   - \mV^p(X; [a,b]) \right) \xrightarrow[n\to\infty]{\cD}\cN(0,\sigma^2_0).
 \end{equation}
Moreover, if $Y$ has $C^2[a,b]$ sample paths, and \eqref{eq:CLTSmoothPert} holds for $p=2$ and $\alpha=1/2$, then \eqref{eq:CLTSmoothPert2} holds true too, with $p=2, \alpha=1/2$.
\end{proposition}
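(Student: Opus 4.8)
\emph{Sketch of proof.} The plan is to show that adding the smooth process $Y$ perturbs every discrete $p$-variation sum by a negligible amount, after which the three displayed conclusions follow by passing to the limit — respectively in the a.s.\ sense, in probability, and (for the CLT statements) via Slutsky's theorem. The analytic engine will be the elementary inequality: for every $p>1$ there is $c_p>0$ with
\[
\bigl|\,|a+b|^p-|a|^p\,\bigr|\;\le\;c_p\bigl(|b|\,|a|^{p-1}+|b|^p\bigr),\qquad a,b\in\bR,
\]
obtained by writing $|a+b|^p-|a|^p=\int_0^1\frac{d}{dt}|a+tb|^p\,dt$, using $|a+tb|^{p-1}\le(|a|+|b|)^{p-1}$ for $t\in[0,1]$, and then $(|a|+|b|)^{p-1}\le c_p(|a|^{p-1}+|b|^{p-1})$ (subadditivity if $p\le2$, convexity if $p\ge2$).

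Writing $\Delta_jX:=X(t_j)-X(t_{j-1})$ and $\Delta_jY:=Y(t_j)-Y(t_{j-1})$ for $t_j$ in the uniform partition, I would apply this termwise, sum, and bound the cross term $\sum_j|\Delta_jY|\,|\Delta_jX|^{p-1}$ by Hölder's inequality with exponents $p$ and $p/(p-1)$, to get
\[
\bigl|\mV^p_n(X+Y)-\mV^p_n(X)\bigr|\;\le\;c_p\Bigl(\bigl(\mV^p_n(Y)\bigr)^{1/p}\bigl(\mV^p_n(X)\bigr)^{(p-1)/p}+\mV^p_n(Y)\Bigr).
\]
Since $Y\in C^1[a,b]$, $|\Delta_jY|\le\norm{Y'}_\infty(b-a)/n$, hence $\mV^p_n(Y)\le(\norm{Y'}_\infty(b-a))^p\,n^{1-p}\to0$ because $p>1$; the factor $\mV^p_n(X)$ stays bounded along the partition sequence (a.s.\ when $0<\mV^p(X)<\infty$, and tight, i.e.\ $O_\bP(1)$, when $0<\mV^p_\bP(X)<\infty$). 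Thus the right-hand side tends to $0$ a.s., resp.\ in probability, which gives \eqref{eq:SamePVarSmoothPert} and \eqref{eq:SamePVarSmoothPertbP}.

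For \eqref{eq:CLTSmoothPert2} I would decompose $n^\alpha(\mV^p_n(X+Y)-\mV^p(X))=n^\alpha(\mV^p_n(X)-\mV^p(X))+n^\alpha(\mV^p_n(X+Y)-\mV^p_n(X))$; the first summand converges in law to $\cN(0,\sigma_0^2)$ by \eqref{eq:CLTSmoothPert}, so by Slutsky's theorem it suffices that the second summand tends to $0$ in probability. Multiplying the displayed bound by $n^\alpha$ leaves the powers $n^{\alpha+1/p-1}$ and $n^{\alpha+1-p}$ in front of, respectively, the tight quantity $(\mV^p_n(X))^{(p-1)/p}$ and a constant; the first exponent is negative by the hypothesis $\alpha+1/p<1$, and the second is negative because $\alpha<1-1/p\le p-1$, the last step being $p+1/p\ge2$ (that $\mV^p_n(X)$ is tight follows from \eqref{eq:CLTSmoothPert} and $\alpha>0$, which force $\mV^p_n(X)\to\mV^p(X)$ in probability). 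Hence the second summand vanishes in probability and \eqref{eq:CLTSmoothPert2} follows.

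The final (``moreover'') assertion is the delicate one, and the main obstacle, since there $\alpha+1/p=1$, so the scaling above only shows the cross term is tight, not that it vanishes — and it is precisely here that $C^2$ regularity is needed. For $p=2$ I would expand $\mV^2_n(X+Y)-\mV^2_n(X)=2\sum_j\Delta_jX\,\Delta_jY+\sum_j(\Delta_jY)^2$; the second sum is at most $\norm{Y'}_\infty^2(b-a)^2/n$, so $\sqrt n$ times it vanishes. For the cross term I would use summation by parts: with $b_j=X(t_j)$, $c_j=\Delta_jY$,
\[
\sum_{j=1}^n\Delta_jX\,\Delta_jY\;=\;b_nc_n-b_0c_1-\sum_{j=1}^{n-1}b_j\,(c_{j+1}-c_j),
\]
where $c_{j+1}-c_j=Y(t_{j+1})-2Y(t_j)+Y(t_{j-1})$ is a second difference, hence $|c_{j+1}-c_j|\le\norm{Y''}_\infty((b-a)/n)^2$ by Taylor's theorem; together with $|c_1|,|c_n|\le\norm{Y'}_\infty(b-a)/n$ and $\sup_{[a,b]}|X|<\infty$ a.s., this bounds the entire sum by $K/n$ for an a.s.\ finite $K$. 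Therefore $\sqrt n\,(\mV^2_n(X+Y)-\mV^2_n(X))\to0$ a.s., and Slutsky's theorem applied exactly as above yields \eqref{eq:CLTSmoothPert2} with $p=2$, $\alpha=1/2$. The point to stress is that the naive Cauchy--Schwarz estimate $|\sum_j\Delta_jX\,\Delta_jY|\le(\sum_j(\Delta_jX)^2)^{1/2}(\sum_j(\Delta_jY)^2)^{1/2}$ is of the wrong order after multiplying by $\sqrt n$; one must extract the cancellation through summation by parts, and that is exactly what forces the extra derivative on $Y$.
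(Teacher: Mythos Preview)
Your proof is correct and follows the same overall strategy as the paper: bound $\mV^p_n(X+Y)-\mV^p_n(X)$ pathwise, pass to the limit for \eqref{eq:SamePVarSmoothPert}--\eqref{eq:SamePVarSmoothPertbP}, and invoke Slutsky for the CLT statements. The technical devices differ slightly. For the first three parts the paper uses Minkowski's inequality on $(\mV^p_n)^{1/p}$ and then the mean value theorem on $x\mapsto x^p$, whereas you use the pointwise bound $||a+b|^p-|a|^p|\le c_p(|b||a|^{p-1}+|b|^p)$ together with H\"older; both routes produce the same decisive exponent $n^{\alpha+1/p-1}$. For the $p=2$, $\alpha=1/2$ case the paper writes $\Delta_jY=(b-a)n^{-1}Y'(\zeta_j)$, splits $Y'(\zeta_j)=(Y'(\zeta_j)-Y'(t_{j-1}))+Y'(t_{j-1})$, handles the first piece by Cauchy--Schwarz, and applies summation by parts only to the second piece, recognizing the resulting sum as a Riemann--Stieltjes integral $\int_a^b X\,dY'$. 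Your direct Abel summation with $c_j=\Delta_jY$ and the second-difference bound $|c_{j+1}-c_j|\le\|Y''\|_\infty((b-a)/n)^2$ is a bit more compact and yields the same $O(1/n)$ estimate in one stroke; the two arguments are essentially equivalent.
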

\noindent The proof is deferred to Appendix~\ref{appendix:auxiliaryResults}.

This result allows to construct directly consistent and asymptotically normal estimators for some parameter entering the true law of the perturbed process $X+Y$, given that the $p$-variation $\mV^p(X;[a,b])$ of the unperturbed process $X$ depends non-trivially on the parameter of interest, and this dependence can be computed explicitly.

\begin{remark}\label{rem:1}
As we will see later, finding such suitable representations of the solution $u$ of \eqref{eq:mainSPDE} will be at the core of this study. For some cases such representations are ready available, while for other cases these representations have to be established, which is one of the major task of this work.
\end{remark}

\begin{example}\label{ex:BM}
Let $B$ be a two-sided Brownian motion, and $Y$ be a process with a $C^2(\bR)$ version, and consider the stochastic process
$$
Z(x)=\sqrt{\beta} B(x)+Y(x),\quad x\in\bR,
$$
where $\beta$ is a positive, unknown parameter.

Assume that $Z$ is observed at grid points $\Upsilon^m(a,b)$, for some interval $[a,b]\subset \bR$.
In view of \eqref{eq:SamePVarSmoothPert}, $\mV^2(Z;[a,b]) =  \mV^2(\sqrt{\beta}B;[a,b]) = \beta(b-a)$.
Consequently, the estimator
\begin{align}
\wh{\beta}_{m}=\frac1{b-a}\sum_{j=1}^{m}\left(Z(x_{j})-Z(x_{j-1}) \right)^{2},
\end{align}
is a consistent  estimator of $\beta$, namely  $\lim_{m\rightarrow\infty}\wh{\beta}_{m}=\beta$, $\bP$-a.s..
Moreover, it is well known (cf.~\cite{Nourdin2008,AaziziEs-Sebaiy2016}) that
$$
\sqrt{m}( \mV_m^2(B,[a,b]) - (b-a)) \xrightarrow[m\to\infty]{\cD}\cN(0,2 (b-a)^2),
$$
and thus, by Proposition~\ref{prop:QVarSmoothPert},  the estimator $\widehat\beta_m$ is asymptotically normal, with the rate of convergence given by
\begin{align}
\sqrt{m}(\wh{\beta}_{m}-\beta)\xrightarrow[m\to\infty]{\cD}\cN(0,2\beta^{2}). \label{eq:CLTforQuadVariationBS}
\end{align}
\end{example}

\begin{example}\label{ex:fBM}
Let $B^{H}$ be a fractional Brownian Motion (fBM) with Hurst index $H=\frac14$, and $Y$ be a process with continuously differentiable paths in $(0,+\infty)$. Assume that $\eta$ is the parameter of interest, and suppose that the process
$$
Z^{H}(t)=\eta^{1/4} B^{H}(t)+Y(t),\quad t>0,
$$
is sampled at grid points  $t_{i}\in\Upsilon^n(c,d), i=0,1,\ldots,n$, with $[c,d]\subset(0,\infty)$.
Then,
$$
\wh{\eta}_{n}=\frac1{3(d-c)}\sum_{i=1}^{n}\left(Z^{H}(t_{i})-Z^{H}(t_{i-1}) \right)^{4},
$$
is a consistent estimator of $\eta$, since an fBM with Hurst index $H$ has a finite, non-zero  $p=1/H$-variation.
The asymptotic normality of $\mV_{n}^4(B^H;[c,d])$ is established in Theorem~\ref{thm:CLTforSelfSimilarGaussian}, and Corollary~\ref{corollary:sigmaForFBMwithH=1/4},
and hence, by \eqref{eq:CLTSmoothPert2}, $\wh{\eta}_n$ is also asymptotically  normal, and satisfying
\begin{align}\label{eq:CLTforQuarVariationBHT}
\sqrt{n}(\wh{\eta}_{n}-\eta)\xrightarrow[n\to\infty]{\cD}\cN(0,\frac19\check{\sigma}^{2}\eta^{2}),
\end{align}
where $\check{\sigma}^{2}$ is an explicit constant given in Corollary~\ref{corollary:sigmaForFBMwithH=1/4}.

\end{example}

\section{Time sampling at a fixed space point}\label{sec:FixedX}
In this section we assume that the solution $u$ of \eqref{eq:mainSPDE} is measured according to sampling scheme~(A).
We consider the following estimators for $\theta$, and $\sigma^2$ respectively,
\begin{align}
\wh{\theta}_{n,x}& :=\frac{3(d-c)\sigma^{4}}{\pi\sum_{i=1}^{n}(u(t_{i},x)-u(t_{i-1},x))^{4}}, \label{eq:estThetaFixSpace} \\
\wh{\sigma}^2_{n,x}& :=\sqrt{\frac{\theta\pi}{3(d-c)}\sum_{i=1}^{n}(u(t_{i},x)-u(t_{i-1},x))^{4}}. \label{eq:estSigmaFixSpace}
\end{align}
Clearly, \eqref{eq:estThetaFixSpace} assumes that $\sigma$ is known, while \eqref{eq:estSigmaFixSpace} assumes that $\theta$ is known.
We will prove below that these estimators are consistent and asymptotically normal regardless of the nature of the domain on which the equation \eqref{eq:mainSPDE} is considered. We start with the case of bounded domain, Theorem~\ref{th:bounedFixedX}, followed by the whole space, Theorem~\ref{th:fixedSpaceWholeSpace}.

\begin{theorem}\label{th:bounedFixedX}
Let $u$ be the solution to \eqref{eq:mainSPDE} with $G=[0,\pi]$, and assume that $u$ is sampled at discrete points $\set{(t_{i},x) \mid t_i\in\Upsilon^n(c,d)}$, for some fixed $x\in (0,\pi)$, and $0<c<d<\infty$. Then, assuming $\sigma$ is known, $\widehat{\theta}_{n,x}$ given by \eqref{eq:estThetaFixSpace} is a weakly consistent estimator for $\theta$, that is
\begin{equation}\label{eq:weakThetanx1}
\bP\!-\!\lim_{n\to\infty} \widehat{\theta}_{n,x} =\theta.
\end{equation}
Respectively, if $\theta$ is known, then $\wh \sigma^2_{n,x}$ in \eqref{eq:estSigmaFixSpace} is a weakly consistent estimator of $\sigma^2$.  Moreover, $\widehat{\theta}_{n,x}$ and $\wh \sigma^2_{n,x}$ satisfy the following central limit type convergence
\begin{align}
\sqrt{n}\left(\wh{\theta}_{n,x}-\frac{(d-c)\theta}{n\sigma_{n}^{4}} \right)&\xrightarrow[n\to\infty]{\cD}\cN(0,\theta^{2}\left(\bar{\sigma}_{2}^{2}+\bar{\sigma}_{4}^{2}\right))\label{eq:CLTforTheta},\\
\sqrt{n}\left(\wh{\sigma}_{n,x}^{2}-\frac{\sqrt{n}\sigma_{n}^{2}}{\sqrt{d-c}}\sigma^{2} \right)&\xrightarrow[n\to\infty]{\cD}\cN(0,\frac1{36}\sigma^{4}\left(\bar{\sigma}_{2}^{2}+\bar{\sigma}_{4}^{2}\right)),\label{eq:CLTforSigma}
\end{align}
where
\begin{align}
&\sigma_{n}^{2}=\frac{2}{\sqrt{\pi\theta}}\sum_{k\ge 1}\frac{\sin^{2}(kx)}{k^{2}}(1-e^{-(d-c)\theta k^{2}/n}), \label{eq:sigma_n}\\
\bar{\sigma}_{2}^{2}=72+144\lim_{n\rightarrow\infty}&\sum_{j=1}^{n-1}(1-\frac{j}{n})\left|\frac{F(j)}{\sigma_{n}^{2}} \right| ^{2},\quad \bar{\sigma}_{4}^{2}=24+48\lim_{n\rightarrow\infty}\sum_{j=1}^{n-1}(1-\frac{j}{n})\left| \frac{F(j)}{\sigma_{n}^{2}}\right| ^{4}, \label{eq:sigmaBar}
\end{align}
and
\begin{align}
F(j)=\frac1{\sqrt{\pi\theta}}\sum_{k\ge 1}\frac{\sin^{2}(kx)}{k^{2}}\left(2e^{-j(d-c)\theta k^{2}/n}-e^{-(j+1)(d-c)\theta k^{2}/n}-e^{-(j-1)(d-c)\theta k^{2}/n}\right).
\end{align}

\end{theorem}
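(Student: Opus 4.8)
The plan is to realise $\{u(t,x):t\in[c,d]\}$ as a $C^1$-perturbation of a stationary Gaussian process whose $4$-variation is explicit, deduce weak consistency from Proposition~\ref{prop:QVarSmoothPert} with $p=4$, and obtain \eqref{eq:CLTforTheta}--\eqref{eq:CLTforSigma} by first proving a central limit theorem for the $4$-variation of that Gaussian process via a multidimensional fourth-moment theorem of the type in \cite{NualartOrtiz2008} and then transporting it to $u$ through the last part of Proposition~\ref{prop:QVarSmoothPert}. In the Dirichlet eigenbasis $e_k(\xi)=\sqrt{2/\pi}\sin(k\xi)$ one has $u(t,x)=\sqrt{2/\pi}\sum_{k\ge1}\sin(kx)u_k(t)$ with $u_k$ independent Ornstein--Uhlenbeck processes, $du_k=-\theta k^2u_k\,dt+\sigma\,dw_k$, $u_k(0)=0$. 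After enlarging the probability space to extend each $w_k$ to a two-sided Brownian motion, set $X(t):=\sqrt{2/\pi}\,\sigma\sum_k\sin(kx)\int_{-\infty}^te^{-\theta k^2(t-s)}dw_k(s)$ and $Y(t):=\sqrt{2/\pi}\,\sigma\sum_k\sin(kx)e^{-\theta k^2t}\int_{-\infty}^0e^{\theta k^2s}dw_k(s)$, so that $u(\cdot,x)=X-Y$ on $[c,d]$; then $X$ is centred stationary Gaussian, while the factors $e^{-\theta k^2t}$ with $t\ge c>0$ make the series for $Y$ and all of its formally differentiated series converge uniformly on $[c,d]$, so $-Y$ has $C^\infty[c,d]$ sample paths a.s. This is the representation announced in Remark~\ref{rem:1}.

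The process $X$ has covariance $\Gamma(|t-s|)$, $\Gamma(h)=\frac{\sigma^2}{\pi\theta}\sum_k\frac{\sin^2(kx)}{k^2}e^{-\theta k^2h}$, so the grid increments $\Delta_iX:=X(t_i)-X(t_{i-1})$ satisfy $\bE[(\Delta_iX)^2]=\frac{\sigma^2}{\sqrt{\pi\theta}}\sigma_n^2$ and $\bE[\Delta_iX\,\Delta_{i+j}X]=\frac{\sigma^2}{\sqrt{\pi\theta}}F(j)$, with $\sigma_n^2,F(j)$ as in the statement. Poisson summation gives, for $x\in(0,\pi)$ and $a\downarrow0$, $\sum_k\frac{\sin^2(kx)}{k^2}(1-e^{-ak^2})=\tfrac12\sqrt{\pi a}+O(e^{-c_x/a})$ (the $\cos(2kx)$-part cancelling the $O(a)$ correction); hence $n\sigma_n^4\to d-c$, while for each fixed $j$ one gets $F(j)/\sigma_n^2\to\tfrac12(\sqrt{j+1}+\sqrt{j-1}-2\sqrt j\,)$, the increment correlations of fractional Brownian motion with Hurst index $\tfrac14$, together with a uniform bound $|F(j)/\sigma_n^2|\le Cj^{-3/2}$ for $1\le j<n$. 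A routine $L^2$ estimate then yields $\mV^4_\bP(X;[c,d])=3\lim_n n(\bE[(\Delta_1X)^2])^2=\frac{3(d-c)\sigma^4}{\pi\theta}$, and, since $-Y\in C^1[c,d]$, Proposition~\ref{prop:QVarSmoothPert} with $p=4$ gives $\mV^4_\bP(u(\cdot,x);[c,d])=\frac{3(d-c)\sigma^4}{\pi\theta}$; substituting this into \eqref{eq:estThetaFixSpace}--\eqref{eq:estSigmaFixSpace} proves \eqref{eq:weakThetanx1} and the consistency of $\wh\sigma^2_{n,x}$.

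For the CLT, write $\Delta_iX=\varsigma_n\xi_i$ with $\xi_i\sim\cN(0,1)$, $\varsigma_n^2=\frac{\sigma^2}{\sqrt{\pi\theta}}\sigma_n^2$, and expand $(\Delta_iX)^4-\bE[(\Delta_iX)^4]=\varsigma_n^4(H_4(\xi_i)+6H_2(\xi_i))$ in Hermite polynomials; then $\mV^4_n(X)-\bE[\mV^4_n(X)]$ is the sum of a fourth- and a second-chaos element of the isonormal process generated by $\{w_k\}$, with variances $\varsigma_n^8\,n\big(24+48\sum_{j=1}^{n-1}(1-\tfrac jn)|\tfrac{F(j)}{\sigma_n^2}|^4\big)$ and $\varsigma_n^8\,n\big(72+144\sum_{j=1}^{n-1}(1-\tfrac jn)|\tfrac{F(j)}{\sigma_n^2}|^2\big)$. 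By the estimates above the relevant series are summable and dominated, so, using $n^2\varsigma_n^8\to\big(\tfrac{(d-c)\sigma^4}{\pi\theta}\big)^2$, after multiplication by $n$ these variances tend to $\big(\tfrac{(d-c)\sigma^4}{\pi\theta}\big)^2\bar\sigma_4^2$ and $\big(\tfrac{(d-c)\sigma^4}{\pi\theta}\big)^2\bar\sigma_2^2$ with $\bar\sigma_2^2,\bar\sigma_4^2$ as in \eqref{eq:sigmaBar}; the cross term vanishes by orthogonality of chaoses, and the contraction (equivalently fourth-moment) conditions of \cite{NualartOrtiz2008} reduce, as in the proof of the Breuer--Major theorem, to $\sum_j|F(j)/\sigma_n^2|<\infty$ uniformly in $n$, which holds by the same estimates. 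Thus $\sqrt n(\mV^4_n(X)-\bE[\mV^4_n(X)])$ converges in law to a centred Gaussian; since $\alpha+1/p=\tfrac12+\tfrac14<1$ and $-Y\in C^1[c,d]$, the last part of Proposition~\ref{prop:QVarSmoothPert} transports this to $u(\cdot,x)$, and as $\bE[\mV^4_n(X)]=\frac{3(d-c)\sigma^4}{\pi\theta}+O(e^{-c_x n})$ the centring may be replaced by the expressions appearing in \eqref{eq:CLTforTheta}--\eqref{eq:CLTforSigma}. Finally \eqref{eq:CLTforTheta} and \eqref{eq:CLTforSigma} follow by the delta method applied to $y\mapsto\frac{3(d-c)\sigma^4}{\pi y}$ and $y\mapsto\sqrt{\tfrac{\theta\pi y}{3(d-c)}}$ at $y=\bE[\mV^4_n(X)]$.

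The crux is the sharp analysis of the lattice sums defining $\sigma_n^2$ and $F(j)$: one must show that the fractional-Brownian-motion$(\tfrac14)$ asymptotics hold uniformly in $n$ and in $1\le j<n$ (the interiority of $x$ guaranteeing that the $\sin^2(kx)$-oscillations contribute only exponentially small errors), and then propagate those bounds through the chaos-expansion variance and contraction estimates. Granted that, the remaining CLT input is the standard fourth-moment/Breuer--Major machinery, and the passage to $u$ via Proposition~\ref{prop:QVarSmoothPert} is routine bookkeeping.
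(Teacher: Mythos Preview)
Your argument is correct and structurally identical to the paper's: decompose $u(\cdot,x)$ as a stationary Gaussian process plus a $C^\infty$ remainder, deduce consistency from Proposition~\ref{prop:QVarSmoothPert}, and obtain the CLT for the $4$-variation via the Hermite/chaos expansion and the fourth-moment theorem of \cite{NualartOrtiz2008}. Your stationary piece $X$, built by extending each $w_k$ to a two-sided Brownian motion, is exactly the process the paper calls $\frac{\sigma}{(\pi\theta)^{1/4}}v$; its auxiliary i.i.d.\ normals $\eta_k$ are your $\sqrt{2\theta}\,k\int_{-\infty}^0 e^{\theta k^2 s}\,dw_k(s)$ in disguise.

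The one substantive difference is the analytic control of $\sigma_n^2$ and $F(j)$. You use Poisson summation to get the sharp expansion $\sigma_n^2=\sqrt{(d-c)/n}+O(e^{-c_x n})$ and identify the limiting correlations with those of fBM$(\tfrac14)$, together with a uniform $j^{-3/2}$ bound; this lets you replace the $n$-dependent centring by a constant and invoke Proposition~\ref{prop:QVarSmoothPert} verbatim. The paper proceeds more elementarily: it proves only $\sqrt{n}\,\sigma_n^2\to\sqrt{d-c}$ by comparison with an integral (Lemma~\ref{lemma:keylimit}), and handles the correlations through the telescoping identity $F(j)=G_j-G_{j-1}$ with $G_j$ positive and decreasing, which gives $\sum_{j\ge1}|F(j)|^m\le \tfrac12\sigma_n^{2m}$ for every $m\ge1$ with no pointwise rate at all. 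That cruder bound already suffices for the variance and contraction estimates, but does not allow the centring to be replaced by $\theta$; this is why the theorem is stated with the $n$-dependent centre $(d-c)\theta/(n\sigma_n^4)$ and why the paper redoes the perturbation step by hand (the display labelled \eqref{eq:inter1}) rather than citing Proposition~\ref{prop:QVarSmoothPert} directly. Your Poisson-summation refinement therefore buys a cleaner invocation of Proposition~\ref{prop:QVarSmoothPert} and a slightly stronger conclusion, at the price of a more delicate analytic lemma; the paper's telescoping trick is the simpler route to exactly the statement as written.
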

To study the case of sampling scheme (A) for bounded domain, as it turns out,  is delicate, primarily since there are no ready available convenient representations of the solution, in contrast to the case of whole space discussed later (cf. \eqref{eq:decomSolWithTime}). First we will establish such representation of the solution, which is also an important analytical result on its own.  To the best of our knowledge, the only relevant result regarding this can be found in \cite{walsh1981}, where the author proved that for a similar SPDE at $x=0$ the $4$-variation (in time) of the solution converges to a constant. We will prove that the $4-$variation converges to a constant at any fixed space point $x$. Moreover, we also establish the asymptotic normality property of the 4-variation, for which we use techniques from Malliavin calculus.

\begin{proposition}\label{th:bdDecomTime}
Let $x\in(0,\pi)$ be a fixed space point.  Then, the solution $u(t,x)$ of the equation \eqref{eq:mainSPDE} with $G=[0,\pi]$ admits the following decomposition
\begin{align}\label{eq:u10}
u(t,x)=\frac{\sigma}{(\pi\theta)^{1/4}}v(t)+S(t),\quad t>0,
\end{align}
where $v$ and $S$ are zero-mean Gaussian processes such that:
\begin{enumerate}[(a)]
\item $S(t)$ is continuous on  $[0,+\infty)$, and infinitely differentiable on $(0,\infty)$;
\item $v(t)$ has finite $4-$variation (with convergence in probability)
\begin{align}\label{eq:4varConvInProb}
\bP-\lim_{n\rightarrow\infty}\mV_{n}^{4}(v;[c,d])=3(d-c).
\end{align}
\item the 4-variation admits the asymptotic normality property
\begin{align}\label{eq:CLT4var}
\sqrt{n}\left(\frac{\mV_{n}^{4}(v;[c,d])}{n\sigma_{n}^{4}}-3 \right)\xrightarrow[n\to\infty]{\cD}\cN(0,\bar{\sigma}_{2}^{2}+\bar{\sigma}_{4}^{2}),
\end{align}
where $\sigma_n, \bar{\sigma}_2, \bar{\sigma}_4$ are constants given by \eqref{eq:sigma_n} and \eqref{eq:sigmaBar}.
\end{enumerate}

\end{proposition}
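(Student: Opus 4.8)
The plan is to extract from $u(t,x)$, via its Fourier expansion, an explicit stationary Gaussian ``rough part'' $v$ whose increments are governed by the constants $\sigma_n$ and $F(j)$, and to show that the remainder is smooth. Let $e_k(x)=\sqrt{2/\pi}\,\sin(kx)$, $k\ge1$, be the eigenfunctions of the Dirichlet Laplacian on $[0,\pi]$, and write $u(t,x)=\sum_{k\ge1}u_k(t)e_k(x)$, where $u_k(t)=\sigma\int_0^t e^{-\theta k^2(t-s)}\dif w_k(s)$ are independent Ornstein--Uhlenbeck processes driven by i.i.d. standard Brownian motions $w_k$. After enlarging the stochastic basis, extend each $w_k$ to a two-sided Brownian motion and let $\tilde u_k(t)=\sigma\int_{-\infty}^t e^{-\theta k^2(t-s)}\dif w_k(s)$ be the associated stationary solution, so that $u_k(t)=\tilde u_k(t)-e^{-\theta k^2 t}\tilde u_k(0)$ for $t\ge0$. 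Setting
\[
v(t):=\frac{(\pi\theta)^{1/4}}{\sigma}\sum_{k\ge1}e_k(x)\tilde u_k(t),\qquad S(t):=-\sum_{k\ge1}e_k(x)e^{-\theta k^2 t}\tilde u_k(0),
\]
yields \eqref{eq:u10}; both series converge in $L^2$, uniformly on compacts, because $\sum_k e_k(x)^2k^{-2}<\infty$. A direct computation gives that $v$ is centred, stationary Gaussian with $\bE[v(t)v(s)]=(\pi\theta)^{-1/2}\sum_k k^{-2}\sin^2(kx)e^{-\theta k^2|t-s|}$; hence $\bE[(v(t+h)-v(t))^2]=\sigma_n^2$ when $h=(d-c)/n$, and $\bE[(v(t_i)-v(t_{i-1}))(v(t_j)-v(t_{j-1}))]=F(|i-j|)$. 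For (a), on $(0,\infty)$ the series for $S$ may be differentiated term by term arbitrarily often, the differentiated series converging uniformly on $[t_0,\infty)$ for every $t_0>0$ (in $L^2$, and a.s. by a Borel--Cantelli bound on the Gaussian tail sums), since $e^{-\theta k^2 t}$ dominates any power of $k$; thus $S\in C^\infty(0,\infty)$. Also $\bE[(v(t)-v(s))^2]\le 2\sqrt{|t-s|}$ (bound $\sin^2(kx)\le1$ and use $\sum_k k^{-2}(1-e^{-\theta k^2 h})=\int_0^{\theta h}\sum_k e^{-rk^2}\dif r\le\int_0^{\theta h}\tfrac12\sqrt{\pi/r}\,\dif r$), so Kolmogorov's criterion gives $v$ a continuous modification on $[0,\infty)$, whence $S=u-\sigma(\pi\theta)^{-1/4}v$ is continuous on $[0,\infty)$.

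For (b): as $v$ is stationary Gaussian, $X_i:=v(t_i)-v(t_{i-1})\sim\cN(0,\sigma_n^2)$ and $\bE[\mV_n^4(v;[c,d])]=3n\sigma_n^4$. The theta asymptotics $\sum_{k\ge1}e^{-rk^2}=\tfrac12\sqrt{\pi/r}-\tfrac12+(\text{exp.\ small})$ as $r\downarrow0$ give $\sum_k k^{-2}(1-e^{-\theta k^2 h})\sim\sqrt{\pi\theta h}$, while Poisson summation shows the oscillatory remainder $\sum_k k^{-2}\cos(2kx)(1-e^{-\theta k^2 h})$ is $O(h)$ for $x\in(0,\pi)$; hence $\sigma_n^2\sim\sqrt{(d-c)/n}$, $n\sigma_n^4\to d-c$, and $\bE[\mV_n^4(v)]\to 3(d-c)$. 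Put $\varrho_j^{(n)}:=F(j)/\sigma_n^2$ (so $\varrho_0^{(n)}=1$). The Gaussian product formula $\bE[Z_1^4Z_2^4]=9+72\varrho^2+24\varrho^4$ for standard jointly Gaussian $(Z_1,Z_2)$ of correlation $\varrho$ gives
\[
\Var\!\Big(\tfrac{\mV_n^4(v;[c,d])}{n\sigma_n^4}\Big)=\frac{96}{n}+\frac{2}{n}\sum_{j=1}^{n-1}\Big(1-\tfrac jn\Big)\big(72|\varrho_j^{(n)}|^2+24|\varrho_j^{(n)}|^4\big),
\]
so it suffices to bound the sum uniformly in $n$, which follows from $|\varrho_j^{(n)}|\le Cj^{-3/2}$ uniformly in $n$: writing $F(j)$ as the symmetric second difference (step $(d-c)/n$) of $\phi(h):=(\pi\theta)^{-1/2}\sum_k k^{-2}\sin^2(kx)(1-e^{-\theta k^2 h})$ and splitting $\phi(h)=\tfrac12\sqrt h+\psi(h)$ with $\psi$ smooth at $0$ (from the expansions above), the second difference of $\tfrac12\sqrt h$ produces $\tfrac12\sqrt{(d-c)/n}\,(\sqrt{j+1}-2\sqrt j+\sqrt{j-1})$ — the standardised increment correlation of fractional Brownian motion with $H=\tfrac14$, of order $j^{-3/2}$ — whereas the $\psi$-contribution, after dividing by $\sigma_n^2\sim\sqrt{(d-c)/n}$, is $O(j^{-3/2})$ uniformly and vanishes for each fixed $j$. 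Chebyshev's inequality then gives $\mV_n^4(v;[c,d])/(n\sigma_n^4)\xrightarrow{\bP}3$, and multiplying by $n\sigma_n^4\to d-c$ yields \eqref{eq:4varConvInProb}; moreover $\varrho_j^{(n)}\to\tfrac12(\sqrt{j+1}-2\sqrt j+\sqrt{j-1})$ together with the uniform bound shows, by dominated convergence, that the limits in \eqref{eq:sigmaBar} exist and are finite.

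For (c): using $z^4=H_4(z)+6H_2(z)+3$ with Hermite polynomials,
\[
\sqrt n\Big(\tfrac{\mV_n^4(v;[c,d])}{n\sigma_n^4}-3\Big)=\frac1{\sqrt n}\sum_{i=1}^n\Big(H_4\!\big(\tfrac{X_i}{\sigma_n}\big)+6H_2\!\big(\tfrac{X_i}{\sigma_n}\big)\Big),
\]
and $X_i/\sigma_n$ is a normalised element of the first Wiener chaos of the driving noise, so the $H_2$- and $H_4$-parts are rescaled elements of the second and fourth Wiener chaos. Its variance equals $96+144\sum_j(1-\tfrac jn)|\varrho_j^{(n)}|^2+48\sum_j(1-\tfrac jn)|\varrho_j^{(n)}|^4\to\bar\sigma_2^2+\bar\sigma_4^2$ by (b). With the uniform summability $\sup_n\sum_{j\ge1}|\varrho_j^{(n)}|^q<\infty$ for $q=2,4$ and the pointwise convergence of $\varrho_j^{(n)}$ just established, the central limit theorem for sequences of multiple Wiener integrals of \cite{NualartOrtiz2008}, in its multivariate fourth-moment form, shows the pair of chaos components converges jointly to a centred Gaussian vector; since components of distinct chaoses are uncorrelated, the sum converges to $\cN(0,\bar\sigma_2^2+\bar\sigma_4^2)$, i.e. \eqref{eq:CLT4var}.

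The principal obstacle is the asymptotic analysis feeding (b)--(c): establishing $\sigma_n^2\sim\sqrt{(d-c)/n}$ and, more delicately, the uniform-in-$n$ domination $|\varrho_j^{(n)}|=|F(j)/\sigma_n^2|\le Cj^{-3/2}$ together with the pointwise limits $\varrho_j^{(n)}\to\tfrac12(\sqrt{j+1}-2\sqrt j+\sqrt{j-1})$. This amounts to peeling the fractional-Brownian scaling out of the double series defining $F(j)$ and controlling the residual theta-function and oscillatory ($\cos2kx$) corrections uniformly in $j$, via Poisson summation and theta-function estimates. Once this summable domination is in hand, (b) is a Chebyshev estimate and (c) is a routine application of the fourth-moment machinery for vectors of multiple Wiener integrals.
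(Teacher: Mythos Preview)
Your construction of $v$ and $S$ via the two-sided stationary Ornstein--Uhlenbeck extension is the paper's construction in disguise: the paper introduces independent standard Gaussians $\eta_k$ and sets $S_k(t)=\frac{\sigma}{\sqrt{2\theta}\,k}e^{-\theta k^2 t}\eta_k$, which is exactly your $-e^{-\theta k^2 t}\tilde u_k(0)$ once one identifies $\eta_k$ with $-\sqrt{2\theta}\,k\,\tilde u_k(0)/\sigma$. So for part~(a) the two arguments coincide.

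For (b) and (c), however, your route is genuinely different. The paper observes the elementary telescoping identity $F(j)=G_j-G_{j-1}$ with $G_j\ge0$ decreasing, which immediately yields $\sum_{j\ge1}|F(j)|^m\le \tfrac12\sigma_n^{2m}$ for every $m\ge1$; this single algebraic bound drives both the variance estimate for (b) and the explicit verification of the contraction condition (N1) of Theorem~\ref{th:malliavin4thMomentThm} for (c), with no asymptotics of $F(j)$ required. You instead extract the fine structure of $\varrho_j^{(n)}=F(j)/\sigma_n^2$ via theta-function asymptotics and Poisson summation, showing that $\varrho_j^{(n)}\to\tfrac12(\sqrt{j+1}-2\sqrt j+\sqrt{j-1})$ with a remainder that is $O(n^{-3/2})\le O(j^{-3/2})$ uniformly. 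This is analytically heavier but buys more: it identifies $\bar\sigma_2^2+\bar\sigma_4^2$ with the explicit fractional-Brownian-motion constant $\check\sigma^2$ of Corollary~\ref{corollary:sigmaForFBMwithH=1/4}, thereby explaining why the bounded-domain CLT variance coincides with the whole-line one --- a point the paper leaves implicit. The paper's route is slicker for the bare proposition; yours is more informative.

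One step in your sketch is under-argued. For (c) you invoke the multivariate fourth-moment theorem of \cite{NualartOrtiz2008} on the strength of the uniform summability $\sup_n\sum_{j\ge1}|\varrho_j^{(n)}|^q<\infty$, but you do not verify any of the theorem's equivalent hypotheses. Your bound $|\varrho_j^{(n)}|\le Cj^{-3/2}$ is indeed enough to make the contraction norms $\|f_n^{(m)}\otimes_r f_n^{(m)}\|_{H^{\otimes 2(m-r)}}$ vanish (the computation is the paper's $O_1+2O_2$ estimate verbatim, with your summable majorant replacing the telescoping bound), but this should be carried out rather than asserted.
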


\begin{proof} 

First we note that in this case the Laplace operator $\Delta=\partial_{xx}$ has only discrete spectrum, with eigenvalues $\lambda_k=-k^2, \, k\in\bN$, and with  corresponding eigenfunctions $h_k(x) = \sqrt{2/\pi}\sin(kx), \, k\in\bN$. Moreover, the functions $\set{h_k,\, k\in\bN}$ form a complete orthonormal system in $L^2(G)$, and the noise term can be conveniently written as
$$
W(t,x)=\sum_{k\ge 1}w_{k}(t)h_{k}(x),
$$
where $w_{k}, k\in\bN$, are independent standard Brownian motions. The solution of this equation admits a Fourier series decomposition,
\begin{equation}\label{eq:SolAddBoundedDom}
u(t,x)=\sum_{k\ge 1}u_{k}(t)h_{k}(x),\quad t>0,\quad x\in (0,\pi),
\end{equation}
where each Fourier mode $u_{k}(t)$ is an Ornstein--Uhlenbeck process of the form
\begin{align*}
\dif{u}_{k}(t)&=-\theta k^{2}u_{k}(t)\dif t+\sigma\dif w_{k}(t),\quad t>0,\\
u_{k}(0)&=0.
\end{align*}
Equivalently, we have that
\begin{equation}\label{eq:FourierOU}
u_{k}(t)=\sigma\int_{0}^{t}e^{-\theta k^{2}(t-s)}\dif w_{k}(s).
\end{equation}
Clearly, $u_{k}(t)\sim \mathcal{N}(0,\frac{(1-e^{-2\theta k^{2}t})\sigma^{2}}{2\theta k^{2}})$, and $u_{k}, \, k\in\bN$, are independent random variable.

Assume that $x\in(0,\pi)$ is fixed. We will construct the Gaussian processes $S,v$ explicitly. Let $\{\eta_{k},k\in\bN\}$ be a sequence of i.i.d. standard normal random variables, independent of $\{u_{k},k\in\bN\}$, and let
\begin{align}
S_{k}(t)&:=\frac{\sigma}{\sqrt{2\theta}k}e^{-\theta k^{2}t}\eta_{k},\qquad k\in\bN, \ t\geq 0, \\
S(t)&:=\sum_{k=1}^{\infty}S_{k}(t)h_{k}(x),\quad\ t\geq 0.
\end{align}
Consequently, we put
\begin{align}
v_{k}(t) &:=\frac{(\theta\pi)^{1/4}}{\sigma}\left(u_{k}(t)-S_{k}(t)\right),\qquad k\in\bN, \ t\geq 0, \\
v(t) & :=\sum_{k\ge 1}v_{k}(t)h_{k}(x),\quad t\ge 0,\quad x\in (0,\pi).
\end{align}
Clearly,  $S$ and $v$ are zero-mean Gaussian processes that satisfying \eqref{eq:u10}.

\noindent (a)
It is straightforward to check that $S$ is continuous on $[0,+\infty)]$ and infinitely differentiable on $(0,\infty)$. Moreover,
\begin{align}\label{eq:incrementsk}
\bE \left| S_{k}(t+\epsilon)-S_{k}(t) \right|^{2}=\frac{\sigma^{2}}{2\theta k^{2}}e^{-2\theta k^{2}t}\left(1-e^{-\theta k^{2}\epsilon}\right)^{2}, \qquad k\in\bN, \ t\geq0.
\end{align}

\noindent (b)
By direct computations, using \eqref{eq:FourierOU}, one can show that
\begin{align}
\bE \left| u_{k}(t+\epsilon)-u_{k}(t) \right|^{2}= \frac{\sigma^{2}}{2\theta k^{2}}(1-e^{-\theta k^{2}\epsilon})\left(2-(1-e^{-\theta k^{2}\epsilon})e^{-2\theta k^{2}t}\right),\label{eq:incrementuk}
\end{align}
for $t\geq 0, \ \varepsilon>0, \ k\in\bN$. Combining \eqref{eq:incrementsk}, \eqref{eq:incrementuk} and the independence between $S_{k}$ and $u_{k}$, we deduce that
\begin{align}
\bE\left|v_{k}(t+\epsilon)-v_{k}(t) \right|^{2} =\frac{\sqrt{\pi}}{\sqrt{\theta}k^{2}}(1-e^{-\theta k^{2}\epsilon}), \qquad k\in\bN, \ t\geq 0.
\end{align}
Consequently, we have that
\begin{align}
\bE \left|v(t+\epsilon)-v(t)\right|^{2}=\sum_{k\ge 1}\bE \left|v_{k}(t+\epsilon)-v_{k}(t) \right|^{2}h_{k}^{2}(x)=\frac{2}{\sqrt{\pi\theta}}\sum_{k\ge 1}\frac{\sin^{2}(kx)}{k^{2}}(1-e^{-\theta k^{2}\epsilon}).\label{eq:varofincrement}
\end{align}
We will prove \eqref{eq:4varConvInProb} by showing that
\begin{align}\label{eq:expgoesToCons}
\lim_{n\rightarrow\infty}\bE\left(\mV_{n}^{4}(v;[c,d])\right)&=3(d-c),  \\
\lim_{n\rightarrow\infty}\Var\left(\mV_{n}^{4}(v;[c,d])\right)&=0. \label{eq:vargoesTo0}
\end{align}
Denote by
\begin{align}
\sigma_{n}^{2}:=\bE \left|v(t_{j})-v(t_{j-1})\right|^{2}=\frac{2}{\sqrt{\pi\theta}}\sum_{k\ge 1}\frac{\sin^{2}(kx)}{k^{2}}(1-e^{-(d-c)\theta k^{2}/n}),\quad n\in\bN.
\end{align}
In view of Lemma~\ref{lemma:keylimit},
\begin{align}\label{eq:sigmaAssymp}
\lim_{n\rightarrow\infty}\sqrt{n}\sigma_{n}^{2}=\sqrt{d-c}.
\end{align}
Since $v$ is a zero-mean Gaussian process, we have
$\bE\left|v(t_{j})-v(t_{j-1}) \right|^4=3\sigma_{n}^{4}$,
therefore
$\lim_{n\rightarrow\infty}\bE\left(\mV_{n}^{4}(v;[c,d])\right)=\lim_{n\rightarrow\infty}\sum_{j= 1}^{n}\bE\left|v(t_{j})-v(t_{j-1}) \right|^4=\lim_{n\rightarrow\infty}3n\sigma_{n}^{4}=3(d-c),
$
and hence \eqref{eq:expgoesToCons} is proved.
Next, note that
\begin{align*}
\Var\left(\mV_{n}^{4}(v;[c,d])\right)&=\bE\left(\mV_{n}^{4}(v;[c,d])-\bE\left(\mV_{n}^{4}(v;[c,d])\right)\right)^{2}\\
&=\sum_{j=1}^{n}\bE\left(\left| v(t_{j},x)-v(t_{j-1},x) \right|^4-3\sigma_{n}^{4}\right)^{2}\\
&\qquad + 2\sum_{i<j}\bE\left(\left|v(t_{i},x)-v(t_{i-1},x) \right|^4
-3\sigma_{n}^{4}\right)\left(\left|v(t_{j},x)-v(t_{j-1},x) \right|^4-3\sigma_{n}^{4}\right)\\
&=:J_{1}+J_{2}.
\end{align*}
According to \eqref{eq:sigmaAssymp}, we deduce that
\begin{align}
J_{1}=\sum_{j=1}^{n}\bE\left(\left| v(t_{j},x)-v(t_{j-1},x) \right|^8\right)-9n\sigma_{n}^{8}
=96n\sigma_{n}^{8}\underset{n\to\infty}{\longrightarrow} 0. \label{eq:J1goesTo0}
\end{align}
As far as $J_2$, for $j\ge 1$, we put
\begin{align}\label{eq:F}
F(j)&:=\bE\left(v(t_{i},x)-v(t_{i-1},x)\right)\left(v(t_{i+j},x)-v(t_{i+j-1},x)\right)\\
&=\frac1{\sqrt{\pi\theta}}\sum_{k\ge 1}\frac{\sin^{2}(kx)}{k^{2}}\left(2e^{-j(d-c)\theta k^{2}/n}-e^{-(j+1)(d-c)\theta k^{2}/n}-e^{-(j-1)(d-c)\theta k^{2}/n}\right) \\
&=G_{j}-G_{j-1},
\end{align}
where
\begin{align}
G_{j}:=\frac1{\sqrt{\pi\theta}}\sum_{k\ge 1}\frac{\sin^{2}(kx)}{k^{2}}\left(e^{-j(d-c)\theta k^{2}/n}-e^{-(j+1)(d-c)\theta k^{2}/n}\right),\quad j\geq 0,
\end{align}
and also put $F(0):=\sigma_{n}^{2}$. Since $F(j)<0$, we have that $G_{j}<G_{j-1}$.
Using the property of joint normal distributions, we continue
\begin{align}
J_{2}&=2\sum_{i<j}\bE\left(\left| v(t_{i},x)-v(t_{i-1},x) \right|^4-3\sigma_{n}^{4}\right)\left(\left| v(t_{j},x)-v(t_{j-1},x) \right|^4-3\sigma_{n}^{4}\right)\\
&=2\sum_{i<j}\left(24F^{4}(j-i)+72F^{2}(j-i)\sigma_{n}^{4} \right).
\end{align}
From here, since $\left|F(j-i)\right|\leq \sigma_{n}^{2}$, we deduce that
\begin{align}
J_{2}&\leq 2\sum_{i<j}\left(24|F(j-i)|\sigma_{n}^{6}+72|F(j-i)|\sigma_{n}^{6} \right)=192\sum_{i<j}|F(j-i)|\sigma_{n}^{6}\\
&=192\sigma_{n}^{6}\sum_{j=1}^{n-1}(n-j)\left(G_{j-1}-G_{j}\right).
\end{align}
Note that $\sum_{j=1}^{n-1}(n-j)\left(G_{j-1}-G_{j}\right)=nG_{0}-\sum_{j=0}^{n-1}G_{j}$, and since
\begin{align}
\sum_{j=0}^{n-1}G_{j}&=\sum_{j=0}^{n-1}\frac1{\sqrt{\pi\theta}}\sum_{k\ge 1}\frac{\sin^{2}(kx)}{k^{2}}\left(e^{-j(d-c)\theta k^{2}/n}-e^{-(j+1)(d-c)\theta k^{2}/n}\right)\\
&=\frac1{\sqrt{\pi\theta}}\sum_{k\ge 1}\frac{\sin^{2}(kx)}{k^{2}}\left(1-e^{-(d-c)\theta k^{2}}\right)=\frac12\sigma_{1}^{2},
\end{align}
and $G_{0}=\frac12\sigma_{n}^{2}$, we conclude that
\begin{align}
J_{2}&\leq 192\sigma_{n}^{6}\left(n\frac1{\sqrt{\pi\theta}}\sum_{k\ge 1}\frac{\sin^{2}(kx)}{k^{2}}\left(1-e^{-(d-c)\theta k^{2}/n}\right)-\frac1{\sqrt{\pi\theta}}\sum_{k\ge 1}\frac{\sin^{2}(kx)}{k^{2}}\left(1-e^{-(d-c)\theta k^{2}}\right) \right)\\
&=192\sigma_{n}^{6}\left(\frac{n}{2}\sigma_{n}^{2}-\frac12\sigma_{1}^{2} \right)\overset{n\to\infty}{\longrightarrow} 0. \label{eq:J2goesTo0}
\end{align}
Combining \eqref{eq:J1goesTo0} and \eqref{eq:J2goesTo0}, \eqref{eq:vargoesTo0} is proved.
Consequently, by \eqref{eq:expgoesToCons} and \eqref{eq:vargoesTo0}, we also have that
$\mV_{n}^{4}(v;[c,d])$ converges to $3(d-c)$, both in $L^{2}$ and in probability.

\medskip
\noindent (c) At general level, the proof  of \eqref{eq:CLT4var} is in line with the proof of the central limit theorem in \cite{Corcuera2012} established for a similar but much simpler covariance structure. More precisely, we will apply Theorem~\ref{th:malliavin4thMomentThm}, by showing that   \eqref{eq:NO081} and condition (N1) are satisfied.
We begin by showing that
\begin{align}\label{eq:covarSeries}
\sum_{j=-l}^{r}\left|F(|j|) \right|^{m}\le 2\sigma_{n}^{2m},
\end{align}
for any $m\ge 1$, $\ell,r\in\bN$.
Since $m\ge 1$,
\begin{align}
\sum_{j=1}^{r}\left| F(j) \right|^{m} &= \sum_{j=1}^{r}\left|F(j) \right|^{m-1}\left|F(j) \right|
\leq \sum_{j=1}^{r}\sigma_{n}^{2(m-1)} |F(j)|\\
&=\sum_{j=1}^{r}\sigma_{n}^{2(m-1)}(G_{j-1}-G_{j})=\sigma_{n}^{2(m-1)}(G_{0}-G_{r-1})\\
&\leq \sigma_{n}^{2(m-1)}G_{0} =\frac12\sigma_{n}^{2m},
\end{align}
where we used the fact that $G_j\geq 0$ and $G_{0}=\frac12\sigma_{n}^{2}$.
Therefore,
\begin{align}
\sum_{j=-l}^{r}\left|F(|j|) \right|^{m} &= (\sigma_{n}^{2})^{m} + \sum_{j=1}^{r} |F(j)|^{m} + \sum_{j=1}^{l}|F(j)|^{m}\\
&\leq \sigma_{n}^{2m} + \frac12\sigma_{n}^{2m} + \frac12\sigma_{n}^{2m} = 2\sigma_{n}^{2m}.
\end{align}
With slight abuse of notations, just in this proof, we denote by $\Delta v_{j}^{n}:=v(t_{j},x) - v(t_{j-1},x)$.
Let $\cH$ be the closed subspace of $L^{2}(\Omega, \cF, \bP)$ generated by the random variables
$\frac{\Delta v_{j}^{n}}{\sigma_{n}}$, $1\leq j\leq n; \ j,n\in\bN$.
Then,
\begin{align}
\left| \frac{\Delta v_{j}^{n}}{\sigma_{n}} \right|^{4} - 3 & =\left( \left| \frac{\Delta v_{j}^{n}}{\sigma_{n}} \right|^{4} - 6\left| \frac{\Delta v_{j}^{n}}{\sigma_{n}} \right|^{2}+3\right)+6\left(\left| \frac{\Delta v_{j}^{n}}{\sigma_{n}} \right|^{2}-1\right)\\
&=H_{4}\left(\frac{\Delta v_{j}^{n}}{\sigma_{n}}\right)+6H_{2}\left(\frac{\Delta v_{j}^{n}}{\sigma_{n}}\right)=I_{4}\left[\left(\frac{\Delta v_{j}^{n}}{\sigma_{n}}\right)^{\otimes4} \right]+6I_{2}\left[\left(\frac{\Delta v_{j}^{n}}{\sigma_{n}}\right)^{\otimes2} \right].
\end{align}
Therefore,
\begin{align}\label{eq:QVarAsWienerIntegral}
\sqrt{n}\left(\frac{\mV_{n}^{4}(v;[c,d])}{n\sigma_{n}^{4}}-3\right) =I_{4}\left[\frac1{\sqrt{n}}\sum_{j=1}^{n}\left(\frac{\Delta v_{j}^{n}}{\sigma_{n}}\right)^{\otimes4} \right]+I_{2}\left[\frac6{\sqrt{n}}\sum_{j= 1}^{n}\left(\frac{\Delta v_{j}^{n}}{\sigma_{n}}\right)^{\otimes2} \right].
\end{align}
Let
\begin{align}\label{eq:defOf_fn2_fn4}
f_n^{(2)} := \frac{6}{\sqrt{n}}\sum_{j=1}^{n}\left(\frac{\Delta v_{j}^{n}}{\sigma_{n}}\right)^{\otimes2}, \qquad f_n^{(4)}:=\frac1{\sqrt{n}}\sum_{j=1}^{n}\left(\frac{\Delta v_{j}^{n}}{\sigma_{n}}\right)^{\otimes4},
\end{align}
and consider the sequence of two dimensional random vectors $F_{n} := \left(I_{2}(f_{n}^{(2)}),I_{4}(f_{n}^{(4)}) \right), n\in\bN$, to which we will apply Theorem~\ref{th:malliavin4thMomentThm}.
Using the properties of Wiener integral, we obtain that
\begin{align}
\lim_{n\rightarrow\infty}\bE\left(I_{2}(f_{n}^{(2)})I_{4}(f_{n}^{(4)}) \right)=0,
\end{align}
and hence \eqref{eq:NO081} is satisfied.

Next, we move to verification of condition (N1), which in this case becomes
\begin{align}\label{eq:condition3}
\lim_{n\rightarrow\infty}\norm {f_{n}^{(m)}\otimes_{r} f_{n}^{(m)}} _{H^{2\otimes(m-r)}}^{2}=0.
\end{align}
for $m=2,4$, and $1\leq r\leq m-1$.

Using the linearity of the inner products and the properties of the tensor products of Hilbert spaces, we obtain
\begin{align}
\bE\left(I_{2}(f_{n}^{(2)}) \right)^{2}&= 2\langle f_{n}^{(2)},f_{n}^{(2)}\rangle_{\cH^{\otimes2}}
=\frac{72}{n}\Big\langle\sum_{j=1}^{n}\left(\frac{\Delta v_{j}^{n}}{\sigma_{n}}\right)^{\otimes2},\sum_{j=1}^{n}\left(\frac{\Delta v_{j}^{n}}{\sigma_{n}}\right)^{\otimes2}\Big\rangle_{\cH^{\otimes2}}\\
&=\frac{72}{n}\sum_{i,j=1}^{n}\Big\langle\left(\frac{\Delta v_{i}^{n}}{\sigma_{n}}\right)^{\otimes2},\left(\frac{\Delta v_{j}^{n}}{\sigma_{n}}\right)^{\otimes2}\Big\rangle_{\cH^{\otimes2}}
=\frac{72}{n}\sum_{i,j=1}^{n}\Big\langle\frac{\Delta v_{i}^{n}}{\sigma_{n}},\frac{\Delta v_{j}^{n}}{\sigma_{n}}\Big\rangle_{\cH}^{2}\\
&=\frac{72}{n}\sum_{i,j=1}^{n}\left[\bE\left(\frac{\Delta v_{i}^{n}}{\sigma_{n}}\cdot\frac{\Delta v_{j}^{n}}{\sigma_{n}}\right)\right]^{2}=\frac{72}{n}\sum_{i,j=1}^{n}\frac{\left|F(|j-i|) \right|^{2}}{\sigma_{n}^{4}}\\
&=\frac{72}{n\sigma_{n}^{4}}\left(\sum_{j=1}^{n}|F(0)|^{2}+2\sum_{i<j}|F(j-i)|^{2} \right)=\frac{72}{n\sigma_{n}^{4}}\left(n\sigma_{n}^{4}+2\sum_{j=1}^{n-1}(n-j)|F(j)|^{2} \right)\\
&=72+\frac{144}{\sigma_{n}^{4}}\sum_{j=1}^{n-1}(1-\frac{j}{n}) |F(j)|^{2}=72+144\sum_{j=1}^{n-1}(1-\frac{j}{n})\left|\frac{F(j)}{\sigma_{n}^{2}}\right|^{2},
\end{align}
In view of \eqref{eq:covarSeries}, we have that
$$
\sum_{j=1}^{n-1}(1-\frac{j}{n})\left| \frac{F(j)}{\sigma_{n}^{2}}\right|^{2}
\leq \sum_{j=1}^{\infty}\left| \frac{F(j)}{\sigma_{n}^{2}}\right|^{2} <\infty,
$$
and thus
\begin{align}\label{eq:sigmabar2square}
\bar{\sigma}_{2}^{2}:=\lim_{n\rightarrow\infty}\bE\left(I_{2}(f_{n}^{(2)}) \right)^{2}=72+144\lim_{n\rightarrow\infty}\sum_{j=1}^{n-1}(1-\frac{j}{n})
\left| \frac{F(j)}{\sigma_{n}^{2}}\right|^{2} <\infty.
\end{align}
Similarly,
\begin{align}
\bE\left(I_{4}(f_{n}^{(4)}) \right)^{2} &= 24\Big\langle f_{n}^{(4)},f_{n}^{(4)}\Big\rangle_{\cH^{\otimes4}}
=\frac{24}{n}\Big\langle\sum_{j=1}^{n}\left(\frac{\Delta v_{j}^{n}}{\sigma_{n}}\right)^{\otimes4},\sum_{j=1}^{n}\left(\frac{\Delta v_{j}^{n}}{\sigma_{n}}\right)^{\otimes4}\Big\rangle_{\cH^{\otimes4}}\\
&=\frac{24}{n}\sum_{i,j=1}^{n}\Big\langle\left(\frac{\Delta v_{i}^{n}}{\sigma_{n}}\right)^{\otimes4},\left(\frac{\Delta v_{j}^{n}}{\sigma_{n}}\right)^{\otimes4}\Big\rangle_{\cH^{\otimes4}}=\frac{24}{n}\sum_{i,j=1}^{n}\Big\langle\frac{\Delta v_{i}^{n}}{\sigma_{n}},\frac{\Delta v_{j}^{n}}{\sigma_{n}}\Big\rangle_{\cH}^{4}\\
&=\frac{24}{n}\sum_{i,j=1}^{n}\left[\bE\left(\frac{\Delta v_{i}^{n}}{\sigma_{n}}\cdot\frac{\Delta v_{j}^{n}}{\sigma_{n}}\right)\right]^{4}
=\frac{24}{n}\sum_{i,j=1}^{n}\frac{\left| F(|j-i|) \right|^{4}}{\sigma_{n}^{8}}\\
&\leq 24+48\sum_{j=1}^{n-1}\left|\frac{F(j)}{\sigma_{n}^{2}}\right|^{4}
\leq 24+48\sum_{j=1}^{\infty}\left|\frac{F(j)}{\sigma_{n}^{2}}\right|^{4} <\
\infty,
\end{align}
and consequently,
\begin{align}\label{eq:sigmabar4square}
\bar{\sigma}_{4}^{2}:=\lim_{n\rightarrow\infty}\bE\left(I_{4}(f_{n}^{(4)}) \right)^{2}
=24+48\lim_{n\rightarrow\infty}\sum_{j=1}^{n-1}(1-\frac{j}{n}) \left|\frac{F(j)}{\sigma_{n}^{2}}\right|^{4}<\infty.
\end{align}
Let $a_{2}=6,a_{4}=1$. Then,
\begin{align}
\norm {f_{n}^{(m)}\otimes_{r} f_{n}^{(m)}} _{H^{2\otimes(m-r)}}^{2}&=\norm {\frac{a_{m}}{\sqrt{n}}\sum_{j=1}^{n}\left(\frac{\Delta v_{j}^{n}}{\sigma_{n}}\right)^{\otimes m}\otimes_{r} \frac{a_{m}}{\sqrt{n}}\sum_{j=1}^{n}\left(\frac{\Delta v_{j}^{n}}{\sigma_{n}}\right)^{\otimes m}} _{H^{\otimes2(m-r)}}^{2}\\
&=\norm {\frac{a_{m}^{2}}{n}\sum_{i,j=1}^{n}\left(\frac{\Delta v_{i}^{n}}{\sigma_{n}}\right)^{\otimes m}\otimes_{r} \left(\frac{\Delta v_{j}^{n}}{\sigma_{n}}\right)^{\otimes m}} _{H^{\otimes2(m-r)}}^{2}\\
&=\norm {\frac{a_{m}^{2}}{n}\sum_{i,j=1}^{n}\big\langle \frac{\Delta v_{i}^{n}}{\sigma_{n}},\frac{\Delta v_{j}^{n}}{\sigma_{n}} \big\rangle_{H}^{r}\left(\frac{\Delta v_{i}^{n}}{\sigma_{n}}\right)^{\otimes (m-r)}\otimes \left(\frac{\Delta v_{j}^{n}}{\sigma_{n}}\right)^{\otimes (m-r)}} _{H^{\otimes2(m-r)}}^{2}\\
&=\norm {\frac{a_{m}^{2}}{n}\sum_{i,j=1}^{n}\frac{ \left| F(|j-i|) \right|^{r}}{\sigma_{n}^{2r}}\left(\frac{\Delta v_{i}^{n}}{\sigma_{n}}\right)^{\otimes (m-r)}\otimes \left(\frac{\Delta v_{j}^{n}}{\sigma_{n}}\right)^{\otimes (m-r)}} _{H^{\otimes2(m-r)}}^{2}\\
&=\frac{a_{m}^{4}}{n^{2}\sigma_{n}^{4m}}\sum_{i,j,i',j'=1}^{n}|F(|j-i|)|^{r}|F(|j'-i'|)|^{r}|F(|i'-i|)|^{m-r}|F(|j'-j|)|^{m-r}\\
&\leq \frac{a_{m}^{4}}{n^{2}\sigma_{n}^{4m}}\sum_{i,j,i',j'=1}^{n}\left| F(|j-i|)F(|j'-i'|)F(|i'-i|)F(|j'-j|) \right|\sigma_{n}^{4m-8}\\
&=\frac{a_{m}^{4}}{n^{2}\sigma_{n}^{8}}\sum_{i,j,i',j'=1}^{n}\left|F(|j-i|)F(|j'-i'|)F(|i'-i|)F(|j'-j|) \right|\\
&= O_{1}+2O_{2},
\end{align}
where
\begin{align}
O_{1}&:=\frac{a_{m}^{4}}{n^{2}\sigma_{n}^{8}}\sum_{i',j'=1}^{n}\sum_{i=1}^{n}\Mid F(0)F(|j'-i'|)F(|i'-i|)F(|j'-i|) \Mid,\\
O_{2}&:=\frac{a_{m}^{4}}{n^{2}\sigma_{n}^{8}}\sum_{i',j'=1}^{n}\sum_{i<j}\Mid F(|j-i|)F(|j'-i'|)F(|i'-i|)F(|j'-j|) \Mid.
\end{align}
Note that, by direct computations and using \eqref{eq:covarSeries}, we have
\begin{align}
O_{1}&=\frac{a_{m}^{4}}{n^{2}\sigma_{n}^{6}}\sum_{i',j'=1}^{n}\sum_{i=1}^{n}\Mid F(|j'-i'|)F(|i'-i|)F(|j'-i|) \Mid\\
&\leq \frac{a_{m}^{4}}{n^{2}\sigma_{n}^{6}}\sum_{i',j'=1}^{n}\sum_{i=1}^{n}\mid F(|j'-i'|)\mid \frac{F(|i'-i|)^{2}+F(|j'-i|)^{2}}{2}\\
&\leq \frac{a_{m}^{4}}{n^{2}\sigma_{n}^{6}}\sum_{i',j'=1}^{n}\mid F(|j'-i'|)\mid \frac{2\sigma_{n}^{4}+2\sigma_{n}^{4}}{2} \leq \frac{2a_{m}^{4}}{n^{2}\sigma_{n}^{2}}\sum_{i',j'=1}^{n}\mid F(|j'-i'|)\mid \\
&\leq \frac{2a_{m}^{4}}{n^{2}\sigma_{n}^{2}}\left(\sum_{j=1}^{n}|F(0)|+2\sum_{i<j}\mid F(j-i)\mid \right)\\
&\leq \frac{2a_{m}^{4}}{n}+\frac{4a_{m}^{4}}{n^{2}\sigma_{n}^{2}}\sum_{j=1}^{n-1}(n-j)\mid F(j)\mid = \frac{2a_{m}^{4}}{n}+\frac{4a_{m}^{4}}{n}\sum_{j=1}^{n-1}(1-\frac{j}{n})\mid \frac{F(j)}{\sigma_{n}^{2}}\mid\\
&\underset{n\to\infty}{\longrightarrow} 0.
\end{align}
Similarly,
\begin{align}
O_{2}&=\frac{a_{m}^{4}}{n^{2}\sigma_{n}^{8}}\sum_{i',j'=1}^{n}\sum_{i=1}^{n-1}\sum_{k=1}^{n-i}\Mid F(|i+k-i|)F(|j'-i'|)F(|i'-i|)F(|j'-i-k|) \Mid\\
&=\frac{a_{m}^{4}}{n^{2}\sigma_{n}^{8}}\sum_{i',j'=1}^{n}\sum_{i=1}^{n-1}\sum_{k=1}^{n-i}\Mid F(k)F(|j'-i'|)F(|i'-i|)F(|j'-i-k|) \Mid\\
&\leq \frac{a_{m}^{4}}{n^{2}\sigma_{n}^{8}}\sum_{i',j'=1}^{n}\sum_{i=1}^{n-1}\sum_{k=1}^{n-i}\Mid F(|j'-i'|)F(|i'-i|)\Mid \frac{F(k)^{2}+F(|j'-i-k|)^{2}}{2}\\
&\leq \frac{2a_{m}^{4}}{n^{2}\sigma_{n}^{4}}\sum_{i',j'=1}^{n}\sum_{i=1}^{n-1}\Mid F(|j'-i'|)F(|i'-i|)\Mid\leq \frac{4a_{m}^{4}}{n^{2}\sigma_{n}^{2}}\sum_{i',j'=1}^{n}\Mid F(|j'-i'|)\Mid\\
&\underset{n\to\infty}{\longrightarrow} 0.
\end{align}
Thus, \eqref{eq:condition3} holds true. Therefore, (N2) from  Theorem~\ref{th:malliavin4thMomentThm} holds true, namely, we have that
\begin{align}\label{eq:CLTF}
F_{n}\xrightarrow[n\to\infty]{\cD}\cN\left(0,\left(\begin{array}{lll}
&\bar{\sigma}_{2}^{2} & 0\\
&0 &\bar{\sigma}_{4}^{2}
\end{array}\right)\right). \end{align}
Consequently, \eqref{eq:CLT4var} follows from \eqref{eq:QVarAsWienerIntegral}, \eqref{eq:defOf_fn2_fn4} and \eqref{eq:CLTF}.

The proof is complete.

\end{proof}

\begin{proof}[Proof of Theorem~\ref{th:bounedFixedX}]
By Proposition~\ref{prop:QVarSmoothPert} and Proposition~\ref{th:bdDecomTime}.(a)-(b) we have that
$$
\bP-\lim_{n\to\infty}\mV_n^4(u(\cdot,x); [c,d]) = \frac{3(d-c)\sigma^4}{\pi\theta},
$$
which implies consistency of $\wh{\theta}_{n,x}$ and $\wh{\sigma}_{n,x}^2$.

By similar arguments as in the proof of Proposition~\ref{prop:QVarSmoothPert}, one can also show that
\begin{align}\label{eq:inter1}
\sqrt{n}\left(\frac{\pi\theta\mV_{n}^{4}\left(u(\cdot,x);[c,d]\right)}{n\sigma_{n}^{4}\sigma^{4}}-\frac{\mV_{n}^{4}(v;[c,d])}{n\sigma_{n}^{4}} \right)\rightarrow 0,\quad \textrm{ in }L^{2}\textrm{ and in probability.}
\end{align}
By using this, and \eqref{eq:CLT4var} we have that
\begin{align}\label{eq:CLTforU}
\sqrt{n}\left(\frac{\pi\theta\mV_{n}^{4}\left(u(\cdot,x);[c,d]\right)}{n\sigma_{n}^{4}\sigma^{4}}-3 \right)\xrightarrow[n\to\infty]{\cD}\cN(0,\bar{\sigma}_{2}^{2}+\bar{\sigma}_{4}^{2}).
\end{align}	
Combining \eqref{eq:estThetaFixSpace} and \eqref{eq:CLTforU}, we have
\begin{align}\label{eq:1stCLT}
\sqrt{n}\left(\frac{3(d-c)\theta}{\wh{\theta}_{n,x}n\sigma_{n}^{4}}-3\right)\xrightarrow[n\to\infty]{\cD}\cN(0,\bar{\sigma}_{2}^{2}+\bar{\sigma}_{4}^{2}).
\end{align}
Finally, due to \eqref{eq:weakThetanx1}, and by Slutsky's theorem, \eqref{eq:CLTforTheta} follows at once. Relationship \eqref{eq:CLTforSigma} is proved similarly. This completes the proof.
\end{proof}

\begin{theorem}\label{th:fixedSpaceWholeSpace}
Let $u$ be the solution to \eqref{eq:mainSPDE} with $G=\bR$, and assume that $u$ is sampled at discrete points $\set{(t_{i},x) \mid t_i\in\Upsilon^n(c,d)}$, for some fixed $x\in \bR$, and $0<c<d<\infty$.
Assuming that $\sigma$ is known, we have that $\wh\theta_{n,x}$ is (strongly) consistent and asymptotically normal estimator of $\theta$, i.e.
\begin{align}
& \lim_{n\rightarrow\infty}\wh{\theta}_{n,x}=\theta,\quad\bP-a.s. \label{eq:consistTheta4} \\
& \sqrt{n}(\wh{\theta}_{n,x}-\theta)\xrightarrow[n\to\infty]{\cD}\cN(0,\frac19\theta^{2} \check{\sigma}^{2}), \quad l=2,4.
\label{eq:asynormThetaFixSpace}
\end{align}
where
\begin{equation}\label{eq:constCCheck}
\check{\sigma}^{2}=72\check{\sigma}_{2}^{2}+24\check{\sigma}_{4}^{2}, \qquad
\check{\sigma}_{l}^{2}=\lim_{n\rightarrow\infty}\frac1{n}\sum_{i=1}^{n}\sum_{j=1}^{n}r^{l}(|i-j|).
\end{equation}
Accordingly, assuming that $\theta$ is known, we have that
\begin{align}
&\lim_{n\rightarrow\infty}\wh{\sigma}^{2}_{n,x}=\sigma^{2},\quad \bP-a.s. \label{eq:consistsigma4} \\
&\sqrt{n}(\wh{\sigma}^{2}_{n,x}-\sigma^{2})\xrightarrow[n\to\infty]{\cD}\cN(0,\frac1{36}\sigma^{4}\check{\sigma}^{2}).\label{eq:asynormSigmaFixSpace}
\end{align}
\end{theorem}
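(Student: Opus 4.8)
The plan is to reduce the whole-space case to Example~\ref{ex:fBM}, which is feasible because a convenient representation of the solution is ready at hand. First I would recall (see \cite[Section~3]{Khoshnevisan2014BookSBMS}) that for $G=\bR$ the mild solution is the centered Gaussian field $u(t,x)=\sigma\int_0^t\!\!\int_\bR p_{\theta(t-s)}(x-y)\,W(\dif y,\dif s)$, with $p$ the heat kernel of $\pd t=\theta\pd{xx}$, and that a direct Gaussian computation gives $\bE[u(t,x)u(s,x)]=\tfrac{\sigma^2}{2\sqrt{\pi\theta}}\bigl(\sqrt{t+s}-\sqrt{|t-s|}\bigr)$; in other words, $t\mapsto u(t,x)$ is a scalar multiple of a bifractional Brownian motion with indices $H=K=\tfrac12$. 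Applying the Lei--Nualart decomposition of the bifractional Brownian motion then yields the representation \eqref{eq:decomSolWithTime}, of the form $u(t,x)=\tfrac{\sigma}{(\pi\theta)^{1/4}}B^{1/4}(t)+Y(t)$ for $t>0$, where $B^{1/4}$ is a fractional Brownian motion with Hurst index $H=\tfrac14$ and $Y$ has $C^\infty$---in particular $C^1$---sample paths on $(0,\infty)$. This identifies $u(\cdot,x)$ with the process of Example~\ref{ex:fBM} with $\eta:=\sigma^4/(\pi\theta)$.

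Once \eqref{eq:decomSolWithTime} is in place, the consistency statements are immediate. By Example~\ref{ex:fBM} (which rests on Proposition~\ref{prop:QVarSmoothPert} and on the finite, non-zero $4$-variation of fractional Brownian motion with Hurst index $\tfrac14$), the statistic $\wh\eta_n:=\tfrac1{3(d-c)}\mV_n^4(u(\cdot,x);[c,d])$ satisfies $\wh\eta_n\to\eta=\sigma^4/(\pi\theta)$, $\bP$-a.s. Since $\wh\theta_{n,x}=\sigma^4/(\pi\wh\eta_n)$ and $\wh\sigma^2_{n,x}=\sqrt{\theta\pi\,\wh\eta_n}$, and the maps $e\mapsto\sigma^4/(\pi e)$ and $e\mapsto\sqrt{\theta\pi e}$ are continuous at $e=\eta>0$, the continuous mapping theorem gives $\wh\theta_{n,x}\to\sigma^4/(\pi\eta)=\theta$ and $\wh\sigma^2_{n,x}\to\sqrt{\theta\pi\eta}=\sqrt{\sigma^4}=\sigma^2$, $\bP$-a.s., which are \eqref{eq:consistTheta4} and \eqref{eq:consistsigma4}.

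For the asymptotic normality I would again invoke Example~\ref{ex:fBM}: by Theorem~\ref{thm:CLTforSelfSimilarGaussian}, Corollary~\ref{corollary:sigmaForFBMwithH=1/4} and the central-limit-transfer part of Proposition~\ref{prop:QVarSmoothPert} (valid with $p=4$, $\alpha=\tfrac12$, since $\alpha+1/p=\tfrac34<1$ and $Y\in C^1$), one has $\sqrt n(\wh\eta_n-\eta)\xrightarrow[n\to\infty]{\cD}\cN\bigl(0,\tfrac19\check\sigma^2\eta^2\bigr)$, with $\check\sigma^2=72\check\sigma_2^2+24\check\sigma_4^2$ as in \eqref{eq:constCCheck}. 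A scalar delta method then finishes the proof: with $\wh\theta_{n,x}=g(\wh\eta_n)$, $g(e)=\sigma^4/(\pi e)$, we have $g'(\eta)=-\theta/\eta$ and $(g'(\eta))^2\eta^2=\theta^2$, hence $\sqrt n(\wh\theta_{n,x}-\theta)\xrightarrow[n\to\infty]{\cD}\cN(0,\tfrac19\theta^2\check\sigma^2)$; with $\wh\sigma^2_{n,x}=h(\wh\eta_n)$, $h(e)=\sqrt{\theta\pi e}$, we have $h'(\eta)=\theta\pi/(2\sigma^2)$ and $(h'(\eta))^2\eta^2=\sigma^4/4$, hence $\sqrt n(\wh\sigma^2_{n,x}-\sigma^2)\xrightarrow[n\to\infty]{\cD}\cN(0,\tfrac1{36}\sigma^4\check\sigma^2)$. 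These are exactly \eqref{eq:asynormThetaFixSpace} and \eqref{eq:asynormSigmaFixSpace}.

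The only substantive input here is the decomposition \eqref{eq:decomSolWithTime} together with the precise scaling constant $\sigma/(\pi\theta)^{1/4}$, and this is what makes the whole-space case lighter than the bounded-domain Theorem~\ref{th:bounedFixedX}: it can be read off from the explicit Gaussian field and the bifractional-to-fractional decomposition, rather than built by hand as in Proposition~\ref{th:bdDecomTime}. The one point requiring a little care is that \emph{strong} consistency needs the almost-sure (not merely in-probability) version of Proposition~\ref{prop:QVarSmoothPert}, hence $\mV_n^4(B^{1/4};[c,d])\to 3(d-c)$ $\bP$-a.s.; by self-similarity of fractional Brownian motion this reduces to $\tfrac1n\sum_{i=1}^n X_i^4\to\bE[X_1^4]=3$ for the stationary fractional Gaussian noise $(X_i)$, which is Birkhoff's ergodic theorem since fractional Gaussian noise is ergodic (its spectral measure being atomless).
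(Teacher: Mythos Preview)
Your proof is correct and follows essentially the same route as the paper: invoke the representation \eqref{eq:decomSolWithTime} from \cite{Khoshnevisan2014BookSBMS}, reduce to Example~\ref{ex:fBM}, and finish with the delta method. You supply more detail than the paper does---in particular the bifractional intermediate step, the explicit delta-method computations, and the ergodicity argument securing the \emph{almost sure} (rather than in-probability) $4$-variation limit needed for strong consistency---but the architecture is identical.
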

\begin{proof}
We will use the following representations (cf. \cite[Section~3]{Khoshnevisan2014BookSBMS}) of the solution $u$ of  \eqref{eq:mainSPDE} when $G=\bR$. For every fixed $x\in\bR$, there exists a fractional Brownian motion $B^{H}(t)$ with Hurst index $H=1/4$ and a Gaussian process $Y(t)$ that is continuous on $\bR_{+}$ and infinitely differentiable on $(0,\infty)$, such that
\begin{align}\label{eq:decomSolWithTime}
u(t,x)=\frac{\sigma}{(\theta\pi)^{1/4}}B^{H}(t)+Y(t),\quad t>0.
\end{align}
With this at hand, we apply the results from Example~\ref{ex:fBM} and \eqref{eq:consistTheta4}, \eqref{eq:asynormThetaFixSpace}, \eqref{eq:consistsigma4} follows easily. In addition, applying Delta-method, relationship \eqref{eq:asynormSigmaFixSpace} also follows at once. This concludes the proof.
\end{proof}

\section{Space sampling at a fixed time instance}\label{sec:fixedTime}
Assume that $t>0$ is a fixed time instant, and consider the partition $\Upsilon^m(a,b)$ of the fixed interval $[a,b]\subset G$. Suppose that the solution $u$ of \eqref{eq:mainSPDE} is observed at the grid points $\set{(t,x_{j})\mid  x_j\in \Upsilon^m(a,b), j=1,\ldots,m}$.
Consider the following estimators for $\theta$ and $\sigma^2$ respectively
\begin{align}
\wt{\theta}_{m,t} & :=\frac{(b-a)\sigma^{2}}{2\sum_{j=1}^{m}(u(t,x_{j})-u(t,x_{j-1}))^{2}}, \label{eq:estThetaFixTime} \\
\wt{\sigma}^{2}_{m,t}& :=\frac{2\theta}{b-a}\sum_{j=1}^{m}(u(t,x_{j})-u(t,x_{j-1}))^{2}. \label{eq:estSigmaFixTime}
\end{align}
Similar to Section~\ref{sec:FixedX}, estimator \eqref{eq:estThetaFixTime} assumes that $\sigma$ is known, while \eqref{eq:estSigmaFixTime} assumes that $\theta$ is known. Next we present the main result of this section, that shows that these estimators are consistent and asymptotically normal.

\begin{theorem}\label{th:add1}
Assume that $u$ is the solution of \eqref{eq:mainSPDE} with $G=[0,\pi]$ or $\bR$, and suppose that $u$ is observed according to sampling scheme (B).
Assuming that $\sigma$ is known, the estimator \eqref{eq:estThetaFixTime} of $\theta$ is (strongly) consistent, i.e. $\lim_{m\rightarrow\infty}\wt{\theta}_{m,t}=\theta$ with probability one, and asymptotically normal,
\begin{align}\label{eq:asynormThetaFixTime}
\sqrt{m}(\wt{\theta}_{m,t}-\theta)\xrightarrow[m\to\infty]{\cD}\cN(0,2\theta^{2}).
\end{align}
Assuming that $\theta$ is known, the estimator \eqref{eq:estSigmaFixTime} is a (strongly) consistent and asymptotically normal estimator of $\sigma^2$, with
\begin{align}\label{eq:estSigmaAddFixTime}
\sqrt{m}(\wt{\sigma}^{2}_{m,t}-\sigma^{2})\xrightarrow[m\to\infty]{\cD}\cN(0,2\sigma^{4}).
\end{align}
\end{theorem}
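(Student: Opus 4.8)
The plan is to reduce Theorem~\ref{th:add1} to the classical quadratic-variation facts for Brownian motion via an explicit decomposition of the solution, in the same spirit as Theorem~\ref{th:fixedSpaceWholeSpace} but now in the space variable. The key structural input is that, for every fixed time $t>0$, the solution $u(t,\cdot)$ of \eqref{eq:mainSPDE} on the fixed interval $[a,b]$ decomposes as a scaled (two-sided) Brownian motion in $x$ plus a smooth remainder; concretely I expect
\begin{align}\label{eq:decomSolSpace}
u(t,x)=\frac{\sigma}{\sqrt{2\theta}}\,B(x)+R(x),\quad x\in[a,b],
\end{align}
where $B$ is a Brownian motion and $R$ has $C^1[a,b]$ (in fact $C^2[a,b]$) sample paths. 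On the whole space this is immediate from the known representation (cf.~\cite[Section~3]{Khoshnevisan2014BookSBMS}): $u(t,x)$ as a function of $x$ is a scaled two-sided Brownian motion plus an infinitely differentiable Gaussian process, and one reads off the scaling constant $\sigma^2/(2\theta)$ from the stationary-in-space covariance of the solution. For the bounded domain $G=[0,\pi]$ one must \emph{prove} such a representation; I would do this by the same eigenfunction-splitting trick used in Proposition~\ref{th:bdDecomTime}: write $u(t,x)=\sum_k u_k(t)h_k(x)$, introduce an auxiliary independent Gaussian family to peel off the non-smooth part of the series, and check that the increments of the Brownian piece in $x$ have the right variance while the remainder series converges in $C^2$. (Alternatively, one can invoke a Karhunen--Lo\`eve / Fourier argument as the paper indicates is used for the bounded-domain fixed-time case.)

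Granting \eqref{eq:decomSolSpace}, the argument is short. By Proposition~\ref{prop:QVarSmoothPert} applied with $X=\frac{\sigma}{\sqrt{2\theta}}B$, $Y=R$, and $p=2$, we get
\begin{align}
\mV^2_\bP(u(t,\cdot);[a,b]) &= \mV^2_\bP\!\left(\tfrac{\sigma}{\sqrt{2\theta}}B;[a,b]\right) = \frac{\sigma^2}{2\theta}(b-a),
\end{align}
and in fact the convergence holds $\bP$-a.s.\ since the quadratic variation of Brownian motion along uniform partitions converges almost surely. Plugging this into \eqref{eq:estThetaFixTime} and \eqref{eq:estSigmaFixTime} and using the continuous mapping theorem gives $\lim_m \wt\theta_{m,t}=\theta$ and $\lim_m \wt\sigma^2_{m,t}=\sigma^2$, $\bP$-a.s., which is the consistency claim.

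For the asymptotic normality, the starting point is the classical CLT for the quadratic variation of Brownian motion (the same fact quoted in Example~\ref{ex:BM}): $\sqrt{m}\bigl(\mV^2_m(B;[a,b])-(b-a)\bigr)\xrightarrow{\cD}\cN(0,2(b-a)^2)$, hence, by scaling, $\sqrt{m}\bigl(\mV^2_m(\tfrac{\sigma}{\sqrt{2\theta}}B;[a,b])-\tfrac{\sigma^2}{2\theta}(b-a)\bigr)\xrightarrow{\cD}\cN\bigl(0,\tfrac{\sigma^4}{2\theta^2}(b-a)^2\bigr)$. Since $R$ has $C^2[a,b]$ paths, the last sentence of Proposition~\ref{prop:QVarSmoothPert} (the $p=2$, $\alpha=1/2$ case) transfers this CLT verbatim to $u(t,\cdot)$:
\begin{align}
\sqrt{m}\Bigl(\mV^2_m(u(t,\cdot);[a,b])-\tfrac{\sigma^2}{2\theta}(b-a)\Bigr)\xrightarrow[m\to\infty]{\cD}\cN\!\Bigl(0,\tfrac{\sigma^4}{2\theta^2}(b-a)^2\Bigr).
\end{align}
From here \eqref{eq:estSigmaAddFixTime} is obtained by multiplying through by $2\theta/(b-a)$, and \eqref{eq:asynormThetaFixTime} follows by the Delta method applied to the map $z\mapsto (b-a)\sigma^2/(2z)$ around $z=\tfrac{\sigma^2}{2\theta}(b-a)$, together with Slutsky's theorem and the already-established consistency; the derivative factor produces exactly the variance $2\theta^2$.

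The main obstacle is not the statistical part (which is a mechanical application of Proposition~\ref{prop:QVarSmoothPert} and standard limit theorems once the decomposition is in hand) but establishing \eqref{eq:decomSolSpace} rigorously on the bounded domain: one needs the explicit identification of the Brownian scaling constant $\sigma^2/(2\theta)$ from the spatial covariance of $u(t,\cdot)$, and a careful proof that the complementary Gaussian process is $C^2$ up to and including the endpoints $a,b$ — in particular near the Dirichlet boundary the series must be shown to converge in a strong enough topology. This is the analogue of part (a) of Proposition~\ref{th:bdDecomTime} and is handled by the same type of estimates on the eigenfunction expansion; it can reasonably be deferred to the appendix.
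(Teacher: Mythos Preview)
Your proposal is correct and follows essentially the same route as the paper: establish the spatial decomposition \eqref{eq:decomSolSpace}, then apply Proposition~\ref{prop:QVarSmoothPert} together with Example~\ref{ex:BM} and the Delta method. The only minor difference is in the bounded-domain construction: rather than introducing a full auxiliary independent Gaussian family as in Proposition~\ref{th:bdDecomTime}, the paper simply normalizes the existing coefficients $u_k(t)$ to i.i.d.\ standard normals $\xi_k$, adjoins one extra $\xi_0$, and recognizes $\xi_0+\sum_{k\ge1}k^{-1}\xi_k h_k(x)$ as the Karhunen--Lo\`eve expansion of Brownian motion on $[0,\pi]$ --- exactly your second alternative.
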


\begin{proof} We begin with the case of bounded domain $G=[0,\pi]$. Recall that $u$ in this case is given by \eqref{eq:SolAddBoundedDom}. We will show that\footnote{A similar result, left as an exercise,  can be found in \cite[Exercise~3.10]{Walsh}.} for every fixed $t>0$, there is a Brownian motion $B(x)$ on $[0,\pi]$, and   a Gaussian process $R(x), \, x\in[0,\pi]$ with a $C^\infty(0,\pi)$ version, such that
\begin{align}\label{eq:decompSolWithSpace}
u(t,x)= \frac{\sigma}{\sqrt{2\theta}}B(x) + R(x), \quad x\in[0,\pi].
\end{align}
Indeed, it is enough to take
\begin{align*}
B(x)  &= \xi_{0}+ \sum_{k\ge 1}\frac1{k}\xi_{k}h_{k}(x),
&R(x)  &= -\frac{\sigma x}{\sqrt{2\theta\pi}}\xi_{0}+\frac{\sigma}{\sqrt{2\theta}}\sum_{k\ge 1}\frac{a_{k}-1}{k}\xi_{k}h_{k}(x), \\
 \xi_{k}&=\sqrt{\frac{2\theta k^{2}}{(1-e^{-2\theta k^{2}t})\sigma^{2}}}u_{k}(t),
& a_{k}&=\sqrt{1-e^{-2\theta k^{2}t}}.
\end{align*}
Note that $\xi_k$ are i.i.d. standard Gaussian random variables. It is easy to check that $B$ is a standard Brownian motion on $[0,\pi]$, for example by noting that $B$ is the  Karhunen--Lo\`eve expansion for the Brownian motion, up to some change of variables.
It is also straightforward to show that $R$ is smooth.

With the representation  \eqref{eq:decompSolWithSpace} at hand, in view of Proposition~\ref{prop:QVarSmoothPert} and Example~\ref{ex:BM}, consistency of $\wt{\theta}_{m,t}$ and  $\wt\sigma_{n,t}^2$, as well as asymptotic normality of $\wt\sigma_{n,t}^2$ follows at once.  In addition, employing the Delta-method, also yields \eqref{eq:asynormThetaFixTime}.

The case of whole space $G=\bR$ is addressed similarly. In view of \cite[Section~3]{Khoshnevisan2014BookSBMS}, the decomposition \eqref{eq:decompSolWithSpace} also holds true in this case, with $B(x)$ being a two-sided Brownian and $X(x)$ being a Gaussian process $X(x)$ with a $C^{\infty}(\bR)$ version.

This concludes the proof.  \end{proof}

\section{Space-time sampling and joint estimation of $\theta$ and $\sigma$}\label{sec:space-time}
While the main goal of this work is to find estimators for drift $\theta$ and volatility $\sigma$ assuming minimal information, and also to prove their asymptotic properties, in this section we will address several practical questions related to this problem.

For both sampling schemes (A) and (B), we assumed that one of the two parameters $\theta$ and $\sigma$ can be consistently estimated, if the other one is known.
The first natural question is how to estimate $\theta$ and $\sigma$ simultaneously. For this, it is enough to observe the solution once according to sampling scheme (A) and once by sampling scheme (B).  Indeed, the key observation is that by sampling scheme (A) one can estimate consistently the ratio $\sigma^4/\theta$, while sampling scheme (B) yields a consistent estimator of $\sigma^2/\theta$. Hence, in view of Theorem~\ref{th:bounedFixedX}, Theorem~\ref{th:fixedSpaceWholeSpace} and Theorem~\ref{th:add1}, we have the following consistent estimators for $\theta$ and $\sigma$
\begin{equation}\label{eq:Simultan}
\begin{aligned}
\bar{\theta}_{n,m}&:=\frac{\pi (b-a)^2  \mV_n^4(u(\cdot,x); [c,d]) }
{12(d-c)(\mV_m^2(u(t,\cdot; [a,b])))^2} \xrightarrow[n,m\to\infty]{} \theta  \\
\bar{\sigma}_{n,m}^2&:=\frac{\pi(b-a) \mV_n^4(u(\cdot,x); [c,d])}{6 (d-c) \mV^2_m(u(t,\cdot);[a,b])}\xrightarrow[n,m\to\infty]{} \sigma^2,
\end{aligned}
\end{equation}
where the convergence is either in probability or a.s.

Next, we also consider the estimation problem of $\theta$ and $\sigma$ when the solution $u(t,x)$ is sampled on discrete space-time grid $(t_i,x_j)$, $t_i\in\Upsilon^n(a,b), \ x_j\in\Upsilon^m(c,d)$. Similar to \cite{BibingerTrabs2017}, we simply take the average of the previous estimators with respect to other dimension. Namely, we put
\begin{equation}\label{eq:sampleSpaceTime}
\begin{aligned}
 \wh\theta_{(n,m)} & : = \frac{1}{m} \sum_{j=1}^{m} \wh\theta_{n,x_j}, &
 \wt\theta_{(n,m)} & : = \frac{1}{n} \sum_{i=1}^{n} \wt\theta_{m,t_i}, \\
 \wh\sigma_{(n,m)}^2 & : = \frac{1}{m} \sum_{j=1}^{m} \wh\sigma^2_{n,x_j}, &
 \wt\sigma_{(n,m)}^2 & : = \frac{1}{n} \sum_{i=1}^{n} \wt\sigma^2_{m,t_i}.
  \end{aligned}
  \end{equation}
The consistency of these estimators follows from the results of Section~\ref{sec:FixedX} and Section~\ref{sec:fixedTime}. Similar estimators can be constructed by using \eqref{eq:Simultan}.  The asymptotic normality of the estimators, as well as of $\bar{\theta}_{n,m}$ and $\bar{\sigma}_{n,m}$, is more intricate due to highly nontrivial covariance structure associated with these estimators. This remains an open problem and it will be investigated by the authors in the future works. We conjecture that all estimators in \eqref{eq:sampleSpaceTime} exhibit a rate of convergence equal to $\sqrt{nm}$.

\section{Numerical examples}\label{sec:numerical}
In this section we will present an illustrative numerical example for the main theoretical results. We consider the stochastic heat equation \eqref{eq:mainSPDE} on interval $[0,\pi]$, with zero boundary values, and zero initial conditions. We use the Fourier decomposition \eqref{eq:SolAddBoundedDom} to approximate numerically the solution $u$ of \eqref{eq:mainSPDE}, by fixing $\theta=0.1$, $\sigma=0.2$ and using 15000 Fourier modes. Each Fourier mode is simulated by using exponential Euler scheme, on the same time grid, $t_0=0, t_1, \ldots, t_n=1$.

First we focus on space sampling results, Theorem~\ref{th:add1}. Assuming that $\theta$ is the parameter of interest, we use  \eqref{eq:estThetaFixTime}, to estimate it at some fixed time points, and by taking $[a,b]=[0,\pi]$. A sample path of the estimator $\wt\theta_{m,t}$, are presented in Figure~\ref{fig1}, left panel\footnote{For all figures in this paper, the left panel is dedicated to $\theta$ and the right panel is dedicated to $\sigma$.}, for $t=0.4$ and $t=1$. As expected, the estimator $\wt\theta_{m,t}$ converges to the true value as number of points in partition $\Upsilon^m(0,\pi)$ increases. In Figure~\ref{fig2} we display the sample mean of the $\wt\theta_{m,t}$ computed from 1000 Monte Carlo simulations, which also converges to the true value. In Figure~\ref{fig3} we present the sample standard deviation of the estimator, which exhibits a polynomial decay. The solid black line corresponds to the theoretical standard deviation $\theta\sqrt{2/m}$ given by \eqref{eq:asynormThetaFixTime}, confirming the asymptotic normality result.

Next, we consider the estimator $\wt\theta_{n,m}$ given in \eqref{eq:sampleSpaceTime}, by assuming that the solution is observed on a space-time grid, over entire spacial domain, and time interval $[0,1]$. In Figure~\ref{fig4} we present the values of  $\wt\theta_{n,m}$, as function of $m$ (number of space discretization points) for $n=100$ and $n=500$ (number of time discretization points). Clearly, the rate of convergence of the estimators to the true parameter is significantly faster than using observations at just one time point.

Similar plots, and conclusions are performed for $\sigma$, assuming $\theta$ is known; see the right panels of Figure~\ref{fig1}-\ref{fig4}. Analogous results were obtained for sampling scheme (A), and for brevity we omit presenting the plots here.

Finally, we address the problem of estimating simultaneously $\theta$ and $\sigma$, by using the estimators \eqref{eq:Simultan}. The estimators are displayed in Figure~\ref{fig5}. Similar to the previous examples, we plot the estimates $\bar\theta_{n,m}, \bar\sigma^2_{n,m}$ as functions of number of observed points $m$ in space variable $x$, and for several values of the number of points $n$ in time variable. As $n$ and $m$ increases the estimates converge to the true values of the parameters.

\begin{figure}[h]
\centering
\includegraphics[width=.99\linewidth]{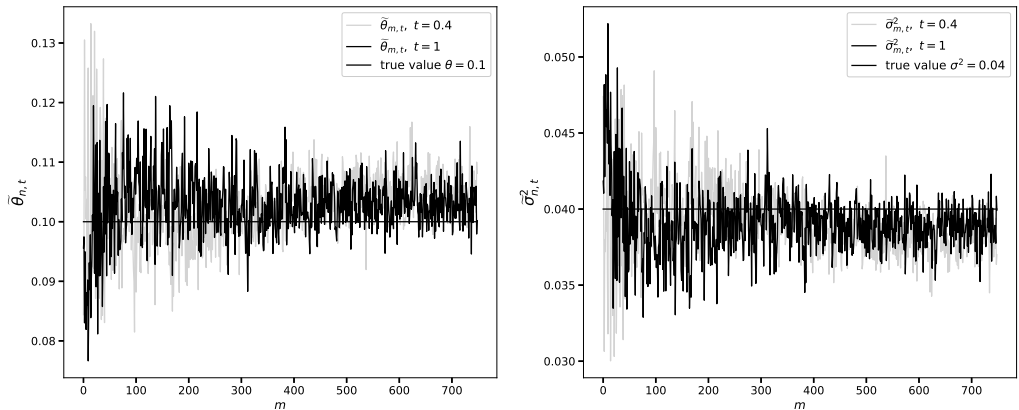}
\caption{{\footnotesize Sampling scheme (B). Sample path of $\wt\theta_{m,t}$ and $\wt\sigma_{m,t}^2$ for $t=0.4$ and $t=1$. }}
\label{fig1}
\end{figure}

\begin{figure}[h]
\centering
\includegraphics[width=.99\linewidth]{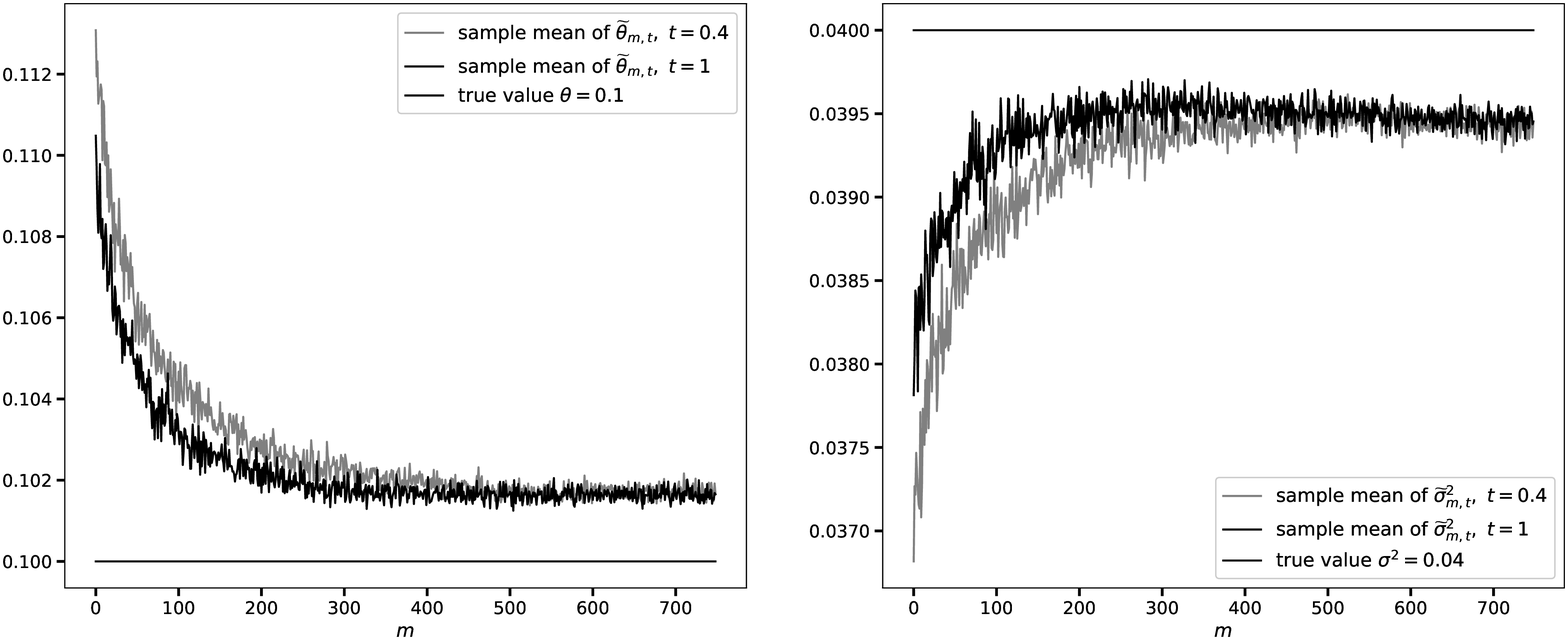}
\caption{{\footnotesize Sampling scheme (B). Sample mean of $\wt\theta_{m,t}$ and $\wt\sigma_{m,t}^2$ for $t=0.4$ and $t=1$. }}
\label{fig2}
\end{figure}

\begin{figure}[h]
\centering
\includegraphics[width=.99\linewidth]{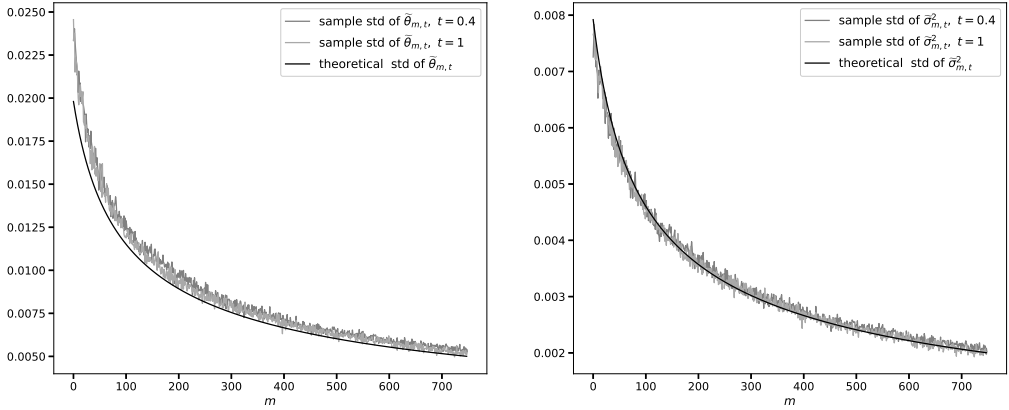}
\caption{{\footnotesize Sampling scheme (B). Sample standard deviation of $\wt\theta_{m,t}$ and $\wt\sigma_{m,t}^2$ for $t=0.4$ (grey) and $t=1$ (dark grey). Solid black lines are theoretical standard deviation from asymptotic normality. }}
\label{fig3}
\end{figure}

\begin{figure}[h]
\centering
\includegraphics[width=.99\linewidth]{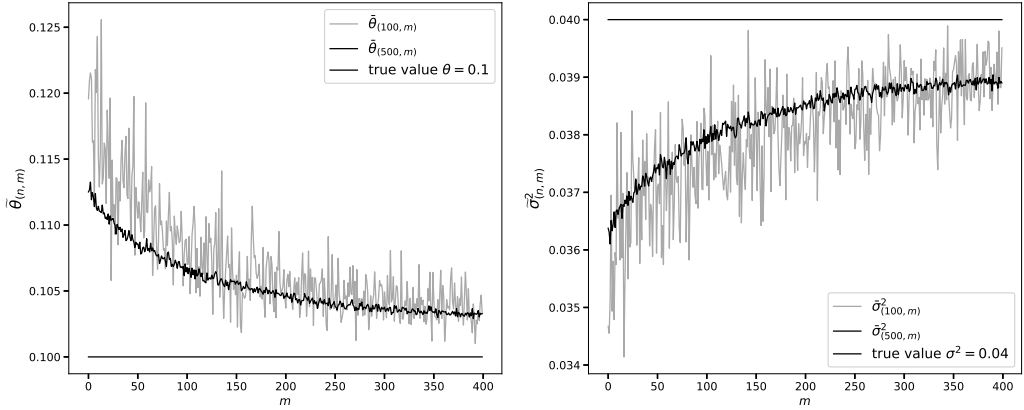}
\caption{{\footnotesize Sampling scheme (B). Sample path of $\wt\theta_{n,m}$ and $\wt\sigma_{n,m}^2$, for $t\in[0,1]$, and $n=100, \ n=500$. }}
\label{fig4}
\end{figure}

\begin{figure}[h]
\centering
\includegraphics[width=.99\linewidth]{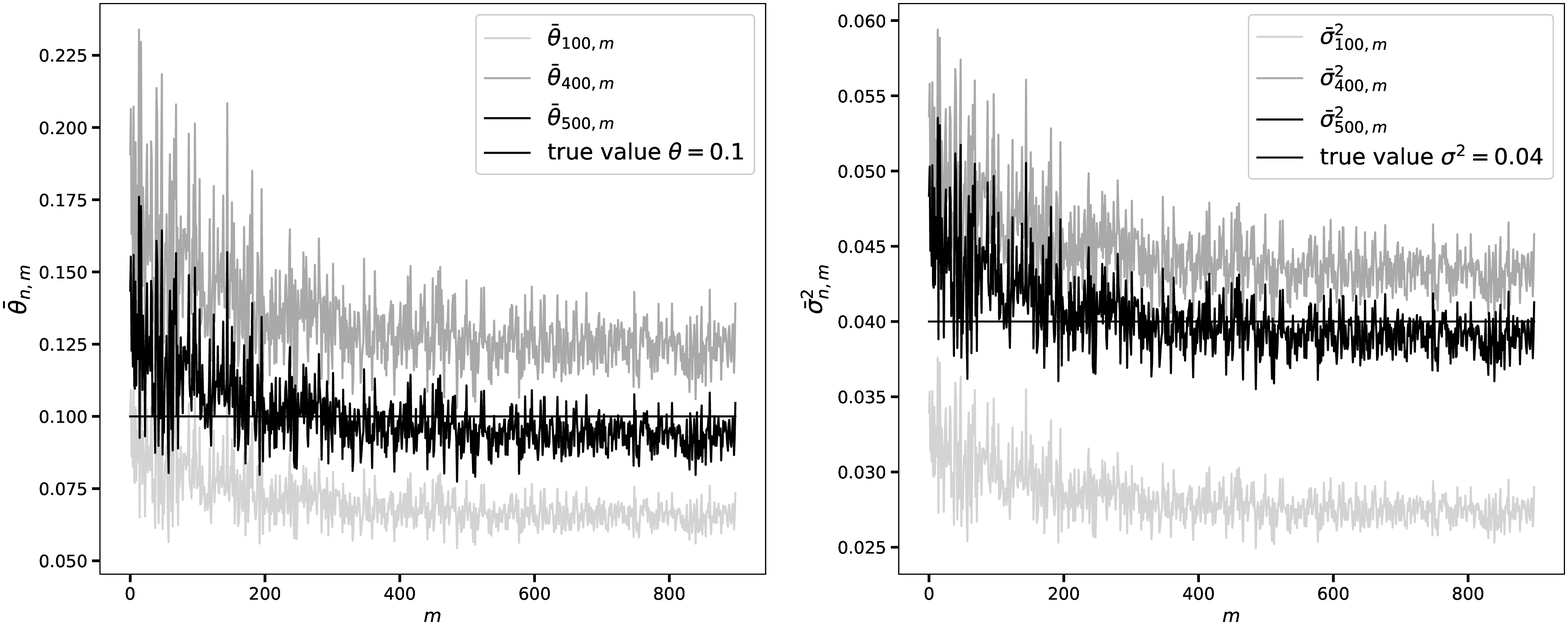}
\caption{{\footnotesize Joint estimation of $\theta$ and $\sigma$. Values of $\bar\theta_{n,m}$ (left panel) and $\bar\sigma^2_{n,m}$ (right panel) as function of $m$ for different values of $n$. Light grey corresponds to $n=100$, darker grey to $n=400$, and black to $n=500$, along the true value of the parameters (horizontal lines).}}
\label{fig5}
\end{figure}

\begin{appendix}

\section{Appendix}\label{appendix:auxiliaryResults}

\subsection*{Proof of Proposition~\ref{prop:QVarSmoothPert}}
First we prove \eqref{eq:SamePVarSmoothPert}. For a similar result  see also \cite[Corollary~2]{CorcueraNualartWoerner2006}. We only outline out proof here. All `$p$-variations' below are on the fixed interval $[a,b]$, and we will omit writing their dependence on $[a,b]$.
By Minkowski's inequality, we have that
\begin{align}
\mid \left(\mV_{n}^{p}(X)\right)^{1/p}-\left(\mV_{n}^{p}(Y)\right)^{1/p}\mid \leq \left(\mV_{n}^{p}(X+Y)\right)^{1/p}\leq \left(\mV_{n}^{p}(X)\right)^{1/p}+\left(\mV_{n}^{p}(Y)\right)^{1/p}.\label{eq:proof2.1varsumlesssumvar}
\end{align}
Since $Y$ has $C^{1}[a,b]$ sample paths, we have $\lim_{n\rightarrow\infty}\mV_{n}^{p}(Y)=0$. Hence, passing to the limit in \eqref{eq:proof2.1varsumlesssumvar}, the identity   \eqref{eq:SamePVarSmoothPert} follows.

As far as \eqref{eq:SamePVarSmoothPertbP}, note that in view of \eqref{eq:proof2.1varsumlesssumvar}, for any $\epsilon>0$,
\begin{align}
\Big\{ \big|  (\mV_{n}^{p}&(X+Y))^{1/p}   - (\mV_{\bP}^{p}(X))^{1/p} \big|\geq \epsilon \Big\} \\
&=\left\{\left(\mV_{n}^{p}(X+Y)\right)^{1/p}\geq \left(\mV_{\bP}^{p}(X)\right)^{1/p}+\epsilon  \right\}\cup \left\{\left(\mV_{n}^{p}(X+Y)\right)^{1/p}\leq \left(\mV_{\bP}^{p}(X)\right)^{1/p}-\epsilon  \right\}\\
&\subset\left\{\left(\mV_{n}^{p}(X)\right)^{1/p}+\left(\mV_{n}^{p}(Y)\right)^{1/p}\geq \left(\mV_{\bP}^{p}(X)\right)^{1/p}+\epsilon  \right\} \\
& \qquad \qquad \cup \left\{ \left| \left(\mV_{n}^{p}(X)\right)^{1/p}-\left(\mV_{n}^{p}(Y)\right)^{1/p} \right| \leq \left(\mV_{\bP}^{p}(X)\right)^{1/p}-\epsilon  \right\}\\
&\subset\left\{ \left| \left(\mV_{n}^{p}(X)\right)^{1/p}+\left(\mV_{n}^{p}(Y)\right)^{1/p}-\left(\mV_{\bP}^{p}(X)\right)^{1/p}\right|\geq \epsilon  \right\}\\
&\qquad \qquad \cup \left\{\left|  \left(\mV_{n}^{p}(X)\right)^{1/p}-\left(\mV_{n}^{p}(Y)\right)^{1/p}-\left(\mV_{\bP}^{p}(X)\right)^{1/p}\right|\geq \epsilon  \right\}\\
&=\left\{\left|\left(\mV_{n}^{p}(X)\right)^{1/p}-\left(\mV_{\bP}^{p}(X)\right)^{1/p}\right|\geq \epsilon/2  \right\}\cup \left\{\left(\mV_{n}^{p}(Y)\right)^{1/p}\geq \epsilon/2  \right\}. 
&
\label{eq:proof2.1setinclusion}
\end{align}
Due to the continuity of $x^{1/p}$, based on our initial assumptions, we have that
$\bP-\lim_{n\rightarrow\infty}\left(\mV_{n}^{p}(X)\right)^{1/p}=\left(\mV_{\bP}^{p}(X)\right)^{1/p}$, and $\bP-\lim_{n\rightarrow\infty}\left(\mV_{n}^{p}(Y)\right)^{1/p}=0$.
Thus, by \eqref{eq:proof2.1setinclusion}, we get at once that 
\begin{align}
\bP-\lim_{n\rightarrow\infty}\left(\mV_{n}^{p}(X+Y)\right)^{1/p}=\left(\mV_{\bP}^{p}(X)\right)^{1/p},
\end{align}
which consequently implies \eqref{eq:SamePVarSmoothPertbP}.

In view of Slutsky's Theorem, to prove \eqref{eq:CLTSmoothPert2}, it is enough to show that
\begin{align}
\lim_{n\rightarrow\infty}n^\alpha \left(  \mV^p_n(X+Y)   - \mV_n^p(X) \right) =0.
\end{align}
By \eqref{eq:proof2.1varsumlesssumvar} and by mean-value theorem, we have
\begin{align}
\mV_{n}^{p}(X+Y)&\leq \left(\left(\mV_{n}^{p}(X)\right)^{1/p}+\left(\mV_{n}^{p}(Y)\right)^{1/p}\right)^{p} \nonumber \\
&=\mV_{n}^{p}(X)+p\left(\left(\mV_{n}^{p}(X)\right)^{1/p}+\eta_{1,n}\left(\mV_{n}^{p}(Y)\right)^{1/p}\right)^{p-1}\left(\mV_{n}^{p}(Y)\right)^{1/p}, \label{eq:ap10}
\end{align}
for some $\eta_{1,n}\in [0,1]$. Since $Y$ has $C^{1}[a,b]$ sample paths, denoting
$M=\sup_{a\leq t\leq b}\mid Y'(t)\mid $, and again by mean-value theorem, we get
\begin{align}\label{eq:apB100}
\mV_{n}^{p}(Y)=\sum_{j=1}^{n} |Y(t_j)-Y(t_{j-1})|^p=\sum_{j=1}^{n} |(t_j-t_{j-1})Y'(\zeta_{j})|^p\leq n(M/n)^{p}.
\end{align}
Therefore, by \eqref{eq:ap10}, and since $\alpha+1/p<1$, we conclude that
\begin{align}
n^\alpha \left(  \mV^p_n(X+Y)   - \mV_n^p(X) \right)&\leq p\left(\left(\mV_{n}^{p}(X)\right)^{1/p}+\eta_{1}\left(\mV_{n}^{p}(Y)\right)^{1/p}\right)^{p-1}n^{\alpha+1/p-1}M \underset{n\to\infty}{\longrightarrow} 0.
\end{align}
Similarly, we have that
\begin{align}
n^\alpha \left(  \mV^p_n(X+Y)   - \mV_n^p(X) \right)&\ge -p\left(\left(\mV_{n}^{p}(X)\right)^{1/p}-\eta_{2}\left(\mV_{n}^{p}(Y)\right)^{1/p}\right)^{p-1}n^{\alpha+1/p-1}M\underset{n\to\infty}{\longrightarrow} 0,
\end{align}
and therefore, \eqref{eq:CLTSmoothPert2} is proved.

Now suppose that $Y$ has $C^2[a,b]$ sample paths, and assume that \eqref{eq:CLTSmoothPert} holds true for $p=2,\alpha=1/2$. To show that \eqref{eq:CLTSmoothPert2} also holds true, it is enough to prove that
\begin{align}\label{eq:apB2}
\lim_{n\rightarrow\infty}n^{1/2} \left(  \mV^2_n(X+Y)   - \mV_n^2(X) \right) =0.
\end{align}
Note that, $\mV^2_n(X+Y)   - \mV_n^2(X) =2\sum_{j= 1}^{n}\left(X(t_j)-X(t_{j-1})\right)\left(Y(t_j)-Y(t_{j-1})\right)+\mV_n^2(Y)$.
Using \eqref{eq:apB100}, we have $n^{1/2}\mV_n^2(Y)\leq n^{3/2}(M/n)^2\rightarrow 0$.

By mean value theorem,
\begin{align}
n^{1/2}\sum_{j=1}^{n}& \left(X(t_{j})-X(t_{j-1}) \right)\left(Y(t_{j})-Y(t_{j-1}) \right)
= n^{-1/2}(b-a)\sum_{i=1}^{n}\left(X(t_{j})-X(t_{j-1}) \right)\left(Y'(\zeta_{j})-Y'(t_{j-1})\right)\\
&  \qquad \qquad + n^{-1/2}(b-a)\sum_{i=1}^{n}\left(X(t_{j})-X(t_{j-1}) \right)Y'(t_{j-1}) =: K_{1}+K_{2}.
\end{align}
Applying Cauchy-Schwartz inequality, we get
\begin{align}
|K_{1}|
&\leq n^{-3/2}(b-a)^2\sum_{i=1}^{n}\Mid \left(X(t_{j})-X(t_{j-1}) \right)\max_{a\leq t\leq b}\mid Y''(t)\mid \Mid \\
&\leq n^{-1}(b-a)^2\max_{a\leq t\leq b}\mid Y''(t)\mid\sqrt{\mV_{n}^{2}(X)} \underset{n\to\infty}{\longrightarrow} 0 .
\end{align}
We rewrite $K_{2}$ as
\begin{align}
K_{2}=n^{-1/2}(b-a)\left(X(b)Y'(b)-X(a)Y'(a)-\sum_{j=1}^{n}X(t_{j})\left(Y'(t_{j})-Y'(t_{j-1})\right)\right).
\end{align}
Since,
$\lim_{n\rightarrow\infty}\sum_{j=1}^{n}X(t_{j})\left(Y'(t_{j})-Y'(t_{j-1})\right)=\int_{a}^{b}X(t)dY'(t)=\int_{a}^{b}X(t)Y''(t)dt,$
we have at once that
\begin{align}
\lim_{n\to\infty}K_{2}=\lim_{n\to\infty}n^{-1/2}(b-a)\left(X(b)Y'(b)-X(a)Y'(a)-\int_{a}^{b}X(t)Y''(t)dt\right)=0.\label{eq:proof2.1ine3}
\end{align}
Combining the above, \eqref{eq:apB2} is proved.

This concludes the proof.

\subsection*{Auxiliary technical results}
In this section we will provide some technical results used in the paper.
We will use the standard notations from \cite{Nualart2006} and \cite{NualartOrtiz2008}, and denote by $H(x;k)$ a polynomial with Hermite rank $k$, that is, $H$ can be expanded in the form
\begin{align}
H(x;k)=\sum_{j=k}^{\infty}c_{j}H_{j}(x),
\end{align}
where $c_{k}\ne 0$, and  $H_{j}$ is the $j$th Hermite polynomial (with leading coefficient $1$),
\begin{align}
H_j(x) = (-1)^j e^{\frac{x^2}{2}}\frac{d^j}{dx^j}(e^{-\frac{x^2}{2}}),\quad j\geq 1.
\end{align}
\begin{theorem}\label{thm:CLTforSelfSimilarGaussian}
Let $\{X_{t},t\ge 0 \}$ be a Gaussian process with the following properties
\begin{enumerate}[(i)]\addtolength{\itemsep}{-.3\baselineskip}
\item $X_{0}=0$, and $\bE X_{t}=0,\quad t\ge 0$.
\item $X_{t+s}-X_{t}\sim \cN(0,\sigma^{2}(s))$, where $\sigma(s)$ is a deterministic function of $s$.
\item There exists a constant $\gamma>0$ such that $\left(X_{\alpha t},t\ge 0 \right)\stackrel{\text{law}}{=}\alpha^{\gamma}\left(X_{t},t\ge 0\right)$, for any $\alpha>0.
$
\item For any $t\ge 0, \Delta t>0$, the sequence $X_{t+n\Delta t}-X_{t+(n-1)\Delta t}, \ n\in\bN$ is stationary. In particular, $Y_{n}=\frac{X_{n}-X_{n-1}}{\sigma(1)},\quad n\in\bN$,
is a zero mean and stationary Gaussian sequence with unit variance.

\item Let $r$ be the covariance function of $Y$,
$r(n)=\bE Y_{m}Y_{m+n}$, and assume that for some positive integer $k$,
$\sum_{n\ge 1}r^{k}(n)<\infty$.
\end{enumerate}
Then,
\begin{align}\label{eq:CLTforHermiteVariation}
\frac1{\sqrt{n}}\sum_{j=1}^{n}H\left(\frac{n^{\gamma}}{\sigma(1)}\left(X_{j/n}-X_{(j-1)/n}\right);k\right)\xrightarrow[n\rightarrow\infty]{\cD}\check{\sigma}\cN(0,1),
\end{align}
where \begin{align}
\check{\sigma}^{2}=\sum_{l=k}^{\infty}c_{l}^{2}l!\check{\sigma}_{l}^{2}, \qquad
\check{\sigma}_{l}^{2}=\lim_{n\rightarrow\infty}\frac1{n}\sum_{i=1}^{n}\sum_{j=1}^{n}r^{l}(|i-j|).
\end{align}
\end{theorem}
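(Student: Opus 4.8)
The statement is a central limit theorem for Hermite variations of a self-similar Gaussian process with stationary increments. The strategy is to reduce the problem to a known CLT for Hermite-rank functionals of stationary Gaussian sequences — precisely the Breuer--Major type theorem, as stated in \cite{NualartOrtiz2008} (or equivalently obtained via the fourth-moment theorem / Malliavin calculus machinery of \cite{Nualart2006}). The self-similarity hypothesis (iii) is the crucial device that converts the scaled increments over the mesh $1/n$ into a copy (in law) of the stationary sequence $Y_n$, so that the normalization $n^{-1/2}$ is exactly the one appearing in the Breuer--Major theorem.

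\textbf{Step 1: Reduce to the stationary sequence.} Fix $n$. By property (ii), $X_{j/n}-X_{(j-1)/n}\sim\cN(0,\sigma^2(1/n))$, and by the self-similarity (iii) together with (iv), the whole finite sequence of normalized increments $\bigl(\tfrac{n^\gamma}{\sigma(1)}(X_{j/n}-X_{(j-1)/n})\bigr)_{j=1}^{n}$ has the same law as $(Y_j)_{j=1}^{n}$. Indeed, applying (iii) with $\alpha=1/n$ gives $(X_{t/n})_{t\ge 0}\stackrel{\text{law}}{=}n^{-\gamma}(X_t)_{t\ge 0}$, hence the increments over mesh $1/n$ of $X$ agree in law with $n^{-\gamma}$ times the unit-mesh increments, and the latter, after dividing by $\sigma(1)$, are exactly $Y_j$ by (iv). In particular $\sigma(1/n) = n^{-\gamma}\sigma(1)$, which pins down the scaling constant. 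Consequently
\begin{align}
\frac1{\sqrt n}\sum_{j=1}^{n} H\!\left(\frac{n^\gamma}{\sigma(1)}\bigl(X_{j/n}-X_{(j-1)/n}\bigr);k\right)
\stackrel{\text{law}}{=}\frac1{\sqrt n}\sum_{j=1}^{n} H(Y_j;k).
\end{align}

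\textbf{Step 2: Apply the Breuer--Major theorem to $H(Y;k)$.} The sequence $(Y_j)$ is zero-mean, stationary, unit-variance Gaussian with covariance $r$, and $H(\cdot;k)$ has Hermite rank $k$ with $\sum_{n\ge 1}|r(n)|^k<\infty$ by (v). The classical Breuer--Major theorem (see \cite[Theorem on p.\ of]{NualartOrtiz2008}, and the Malliavin-calculus proof via the fourth-moment theorem in \cite{Nualart2006}) then yields
\begin{align}
\frac1{\sqrt n}\sum_{j=1}^{n} H(Y_j;k)\xrightarrow[n\to\infty]{\cD}\cN(0,\check\sigma^2),
\end{align}
where, writing $H(x;k)=\sum_{l\ge k}c_l H_l(x)$, one computes $\mathrm{Var}\bigl(\tfrac1{\sqrt n}\sum_{j=1}^n H(Y_j;k)\bigr)=\sum_{l\ge k}c_l^2 l!\cdot\tfrac1n\sum_{i,j=1}^n r^l(|i-j|)$ using orthogonality of Hermite polynomials and the identity $\bE[H_l(Y_i)H_{l'}(Y_j)]=\delta_{ll'}\,l!\,r(|i-j|)^l$; the limiting variance is $\check\sigma^2=\sum_{l\ge k}c_l^2 l!\,\check\sigma_l^2$ with $\check\sigma_l^2=\lim_n \tfrac1n\sum_{i,j}r^l(|i-j|)$, and the summability (v) guarantees these limits are finite (for $l\ge k$, $\sum|r(n)|^l\le(\sup|r|)^{l-k}\sum|r(n)|^k<\infty$, and one checks $\sup|r|\le r(0)=1$). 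Since convergence in distribution depends only on the law, Step 1 lets us transport this conclusion back to the original quantity, proving \eqref{eq:CLTforHermiteVariation}.

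\textbf{Main obstacle.} The genuinely substantive content — the CLT for $\tfrac1{\sqrt n}\sum H(Y_j;k)$ under the covariance-summability condition — is exactly the Breuer--Major theorem, which we are invoking as a black box from \cite{NualartOrtiz2008}. So the real work here is \emph{not} a new limit theorem but the verification that hypotheses (i)--(v) legitimately reduce our self-similar, continuous-parameter situation to that discrete stationary setting; the only delicate point is making the self-similarity rescaling in Step 1 fully rigorous at the level of joint distributions of the increment vector (not just marginals), which follows because (iii) is an identity of laws of the entire process, and checking that the variance computation in Step 2 indeed converges to the stated $\check\sigma^2$ rather than merely being bounded. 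I do not anticipate a serious difficulty; the proof is essentially an application plus bookkeeping of constants.
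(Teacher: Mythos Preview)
Your proposal is correct and follows essentially the same approach as the paper: use the self-similarity (iii) to identify the law of the normalized mesh-$1/n$ increments with that of $(Y_j)_{j=1}^n$, then invoke the Breuer--Major theorem for the stationary sequence. The only cosmetic difference is that the paper cites \cite{BreuerMajor1983} directly rather than \cite{NualartOrtiz2008}, and omits the extra bookkeeping you included on the variance formula and the summability of $|r|^l$ for $l\ge k$.
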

\begin{proof}
By \cite[Theorem~1]{BreuerMajor1983}, applied to the sequence $Y$, we immediately get
\begin{align}
\frac1{\sqrt{n}}\sum_{j=1}^{n}H(Y_{j};k)\xrightarrow[n\rightarrow\infty]{\cD}\check{\sigma}\cN(0,1),
\end{align}
where
\begin{align}
\check{\sigma}^{2}=\sum_{l=k}^{\infty}c_{l}^{2}l!\check{\sigma}_{l}^{2}, \qquad
\check{\sigma}_{l}^{2}=\lim_{n\rightarrow\infty}\frac1{n}\sum_{i=1}^{n}\sum_{j=1}^{n}r^{l}(|i-j|).
\end{align}
Since
\begin{align}
(X_{j/n}-X_{(j-1)/n},j=1,2,\ldots,n)\stackrel{\text{law}}{=}\frac1{n^{\gamma}}(X_{j}-X_{j-1},j=1,2,\ldots,n),
\end{align}
we conclude that \eqref{eq:CLTforHermiteVariation} holds.
\end{proof}

\begin{corollary}\label{corollary:sigmaForFBMwithH=1/4}
The following result is an immediate consequence of Theorem~\ref{thm:CLTforSelfSimilarGaussian}.
Let $B^{H}$ be a fractional Brownian motion with Hurst parameter $H=1/4$. Then,
\begin{align}
\sqrt{n}\left(\mV_{n}^{4}(B^{H};[a,b])-3(b-a) \right)\xrightarrow[n\rightarrow\infty]{\cD}(b-a)\check{\sigma}\cN(0,1),
\end{align}
where
\begin{equation}\label{eq:constCCheck}
\check{\sigma}^{2}=72\check{\sigma}_{2}^{2}+24\check{\sigma}_{4}^{2}, \qquad
\check{\sigma}_{l}^{2}=\lim_{n\rightarrow\infty}\frac1{n}\sum_{i=1}^{n}\sum_{j=1}^{n}r^{l}(|i-j|).
\end{equation}
\end{corollary}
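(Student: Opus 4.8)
The plan is to deduce Corollary~\ref{corollary:sigmaForFBMwithH=1/4} directly from Theorem~\ref{thm:CLTforSelfSimilarGaussian} by verifying that a fractional Brownian motion $B^H$ with $H=1/4$ satisfies hypotheses (i)--(v) with the right choice of $\gamma$ and $H(\,\cdot\,;k)$. First I would recall the defining properties of $B^H$: it is a centered Gaussian process with $B_0^H=0$, stationary increments, $B^H_{t+s}-B^H_t\sim\cN(0,|s|^{2H})$, and the self-similarity $\left(B^H_{\alpha t}\right)\stackrel{\text{law}}{=}\alpha^H\left(B^H_t\right)$. Comparing with (i)--(iv), this shows $\sigma(s)=|s|^{H}=|s|^{1/4}$, so $\sigma(1)=1$, and the self-similarity exponent is $\gamma=H=1/4$. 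The stationary normalized increment sequence is then $Y_n = B^H_n - B^H_{n-1}$, which has unit variance, and its covariance function $r(n)=\tfrac12\left(|n+1|^{2H}+|n-1|^{2H}-2|n|^{2H}\right)$ behaves like $r(n)\sim H(2H-1)n^{2H-2}$ as $n\to\infty$; with $H=1/4$ this gives $r(n)=O(n^{-3/2})$.

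Next I would check hypothesis (v): for which integers $k$ is $\sum_{n\geq 1}r^k(n)<\infty$? Since $r(n)=O(n^{-3/2})$, already $\sum_n |r(n)|<\infty$, so (v) holds with $k=1$, and a fortiori for all $k\geq 1$. I would then take the specific polynomial
\begin{align}
H(x;2) = x^4 - 3 = H_4(x) + 6H_2(x),
\end{align}
which has Hermite rank $k=2$, with coefficients $c_2=6$, $c_4=1$ (all other $c_j=0$). Applying \eqref{eq:CLTforHermiteVariation} with $\gamma=1/4$, $\sigma(1)=1$, and this choice of $H$, and noting that $\tfrac{n^{1/4}}{1}\left(B^H_{j/n}-B^H_{(j-1)/n}\right)$ has the same law (jointly in $j$) as $B^H_j - B^H_{j-1}$ by self-similarity, I get
\begin{align}
\frac{1}{\sqrt n}\sum_{j=1}^{n}\left(\left(n^{1/4}(B^H_{j/n}-B^H_{(j-1)/n})\right)^4 - 3\right) \xrightarrow[n\to\infty]{\cD} \check\sigma\,\cN(0,1),
\end{align}
with $\check\sigma^2 = c_2^2\,2!\,\check\sigma_2^2 + c_4^2\,4!\,\check\sigma_4^2 = 72\check\sigma_2^2 + 24\check\sigma_4^2$, matching \eqref{eq:constCCheck}.

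Finally, I would translate the normalization back to the interval $[a,b]$. Here I would rescale: the grid $\Upsilon^n(a,b)$ has mesh $(b-a)/n$, so $B^H_{x_j}-B^H_{x_{j-1}}$ for $x_j = a+(b-a)j/n$ has, by stationarity of increments and self-similarity, the same law (jointly) as $(b-a)^{1/4} n^{-1/4}$ times $B^H_j - B^H_{j-1}$ (modulo the irrelevant shift by $a$, which stationary increments absorb). Hence $\left(n^{1/4}(B^H_{x_j}-B^H_{x_{j-1}})\right)^4 = (b-a)\left(\text{unit-mesh increment}\right)^4$ in law, and summing gives $\mV_n^4(B^H;[a,b]) = (b-a)\cdot\frac{1}{n}\sum_{j=1}^n\left(n^{1/4}(B^H_j-B^H_{j-1})\right)^4$ in distribution; subtracting $3(b-a)$ and multiplying by $\sqrt n$ produces exactly $(b-a)$ times the left side of the displayed CLT above, yielding $\sqrt n\left(\mV_n^4(B^H;[a,b]) - 3(b-a)\right) \xrightarrow{\cD} (b-a)\check\sigma\,\cN(0,1)$, as claimed.

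The computation is essentially routine; the only point requiring a little care is the bookkeeping of scaling factors when passing from the unit interval to $[a,b]$ and tracking how the exponent $\gamma=1/4$ interacts with the fourth power and the factor $1/\sqrt n$ — getting the $(b-a)$ (rather than, say, $(b-a)^{1/2}$ or $(b-a)^2$) in front of $\check\sigma\cN(0,1)$. I would double-check this by the dimensional/degenerate consistency check: the mean is $3(b-a)$, so its fluctuation scale must also be linear in $(b-a)$, which confirms the exponent.
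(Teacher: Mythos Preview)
Your proof is correct and follows exactly the approach the paper intends: the paper states the corollary as an ``immediate consequence'' of Theorem~\ref{thm:CLTforSelfSimilarGaussian} without further argument, and you have simply spelled out the verification of hypotheses (i)--(v) for $B^H$ with $H=1/4$, the Hermite expansion $x^4-3=H_4(x)+6H_2(x)$ giving $c_2=6$, $c_4=1$ and hence $\check\sigma^2=72\check\sigma_2^2+24\check\sigma_4^2$, and the self-similarity/stationarity rescaling from $[0,1]$ to $[a,b]$.
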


For reader's convenience we also present here a result from \cite{NualartOrtiz2008}, used in the proof of Proposition~\ref{th:bdDecomTime}.
Let $H$ be a separable Hilbert space. For every $n\geq 1$,  the notation $H^{\otimes n}$ will stand for the $n$th tensor product of $H$, and $H^{\odot n}$ will denote the $n$th symmetric tensor product of $H$, endowed with the modified norm $\sqrt{n!}\norm{\cdot}_{H^{\otimes n}}$.
Suppose that $X=\{X(h),h\in H \}$ is an isonormal Gaussian process on $H$, on some fixed probability space, say  $(\Omega,\sF,\bP)$, and assume that $\sF$ is generated by $X$.

For every $n\geq 1$, let $\cH_{n}$ be the $n$th Wiener chaos of $X$, that is, the closed linear subspace of $L^{2}(\Omega,\cF,\bP)$ generated by the random variables $\{H_{n}(X(h)),h\in H,\norm{h}_{H}=1 \}$, where $H_{n}$ is the $n$th Hermite polynomial. We denote by $\cH_{0}$ the space of constant random variables. The mapping $I_{n}(h^{\otimes n})=H_{n}(X(h))$, for $n\geq 1$, provides a linear isometry between $H^{\odot n}$ and $\cH_{n}$. For $n=0$, we have that $\cH_{0}=\bR$, and take $I_{0}$ to be the identity map. It is well known that any square intergrable random variable $F\in L^{2}(\Omega,\sF,\bP)$ admits the following expansion
 \begin{align}
 F=\sum_{n=0}^{\infty}I_{n}(f_{n}),
 \end{align}
where $f_{0}=\bE F$, and the $f_{n}\in H^{\odot n}$ are uniquely determined by $F$.

Let $\{e_{k},k\geq 1 \}$ be a complete orthonormal system in $H$. Given $f\in H^{\odot n}$ and $g\in H^{\odot m}$, for $\ell = 0,\ldots,n \wedge m$, the contraction of $f$ and $g$ of order $\ell$ is the element of $H^{\otimes (n+m-2\ell)}$ defined by
\begin{align}
f\otimes_{\ell} g=\sum_{i_{1},\ldots,i_{\ell}}\langle f,e_{i_{1}}\otimes\cdots\otimes e_{i_{\ell}}\rangle_{H^{\otimes l}}\otimes\langle g,e_{i_{1}}\otimes\cdots\otimes e_{i_{\ell}}\rangle_{H^{\otimes l}}
\end{align}

\begin{theorem}[\cite{NualartOrtiz2008}]
\label{th:malliavin4thMomentThm}
For $d\geq 2$, fix $d$ natural numbers $1\leq n_{1}\leq \cdots\leq n_{d}$. Let $\{F_{k} \}_{k\in\bN}$ be a sequence of random vectors of the form
\begin{align}
F_{k}=(F_{k}^{1},\ldots,F_{k}^{d})=(I_{n_{1}}(f_{k}^{1}),\ldots,I_{n_{d}}(f_{k}^{d})),
\end{align}
where $f_{k}^{i}\in H^{\odot n_{i}}$ and $I_{n_{i}}$ is the Wiener integral of order $n_{i}$, such that, for every $1\leq i,j\leq d$,
\begin{align}\label{eq:NO081}
\lim_{k\rightarrow\infty}\bE\left[F_{k}^{i}F_{k}^{j} \right]=\delta_{ij}.
\end{align}
The following two\footnote{The original result \cite[Theorem~7]{NualartOrtiz2008} contains six equivalent conditions; we list only those two that we use in this paper.} statements are equivalent.
\begin{enumerate}[(N1)]
\item For all $1\leq i\leq d,1\leq \ell\leq n_{i}-1$, $\norm {f_{k}^{(i)}\otimes_{\ell} f_{k}^{(i)}} _{H^{2\otimes(n_{i}-\ell)}}^{2}\rightarrow 0$, as $k\rightarrow\infty$.
\item The sequence $\{F_{k} \}_{k\in\bN}$, as $k\to\infty$, converges in distribution to a $d$-dimensional standard Gaussian vector $\cN_{d}\left(0,I_{d} \right)$.
\end{enumerate}
\end{theorem}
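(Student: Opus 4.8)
The plan is to prove the two implications separately. The direction (N2)$\Rightarrow$(N1) is the soft one and I would dispatch it first; (N1)$\Rightarrow$(N2) is the analytic core, and I would route it through the multivariate Malliavin--Stein method rather than the method of moments. Throughout, the two structural facts I would lean on are: hypercontractivity on a fixed Wiener chaos (equivalence of all $L^p(\Omega)$ norms there), and the product (multiplication) formula for multiple Wiener integrals together with the associated control of chaotic norms by contraction norms.

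For (N2)$\Rightarrow$(N1): assuming $F_k\xrightarrow{\cD}\cN_d(0,I_d)$, project onto the $i$th coordinate to obtain $F_k^i=I_{n_i}(f_k^i)\xrightarrow{\cD}\cN(0,1)$; the diagonal case of the covariance hypothesis gives $\bE[(F_k^i)^2]\to 1$. Since $f_k^i$ lives in the fixed chaos $\cH_{n_i}$, hypercontractivity makes all $L^p(\Omega)$ norms on $\cH_{n_i}$ equivalent, so $\{(F_k^i)^4\}_k$ is uniformly integrable and $\bE[(F_k^i)^4]\to 3$. By the cumulant identity $\bE[I_n(f)^4]-3(\bE[I_n(f)^2])^2=\sum_{\ell=1}^{n-1}c_{n,\ell}\,\|f\widetilde\otimes_\ell f\|^2_{H^{\otimes 2(n-\ell)}}$ with strictly positive constants $c_{n,\ell}$, this forces $\|f_k^i\widetilde\otimes_\ell f_k^i\|\to 0$, hence $\|f_k^i\otimes_\ell f_k^i\|_{H^{\otimes 2(n_i-\ell)}}\to 0$ for every $1\le\ell\le n_i-1$ (the symmetrized and unsymmetrized versions being equivalent), which is precisely (N1). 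No cross terms enter because (N1) is stated coordinatewise.

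For (N1)$\Rightarrow$(N2): first I would upgrade (N1) to the vanishing of the mixed contractions $\|f_k^i\otimes_\ell f_k^j\|\to 0$ (for $1\le\ell\le n_i\wedge n_j-1$) via the standard interpolation bound $\|f\otimes_r g\|^2\le\|f\otimes_{p-r}f\|_{H^{\otimes 2r}}\,\|g\otimes_{q-r}g\|_{H^{\otimes 2r}}$ for $f\in H^{\odot p}$, $g\in H^{\odot q}$. Next I would form the random matrix $\Gamma_k=(\Gamma_k^{ij})_{i,j}$ with $\Gamma_k^{ij}=\tfrac1{n_j}\langle DF_k^i,DF_k^j\rangle_H$, $D$ the Malliavin derivative, and carry out two computations. (i) By orthogonality of multiple integrals of distinct orders, $\bE[\langle DF_k^i,DF_k^j\rangle_H]$ vanishes unless $n_i=n_j$, in which case it equals $n_i\,\bE[F_k^iF_k^j]$; thus $\bE[\Gamma_k^{ij}]=\1_{\{n_i=n_j\}}\bE[F_k^iF_k^j]\to\delta_{ij}$. (ii) Expanding $\langle DF_k^i,DF_k^j\rangle_H$ by the multiplication formula into a finite sum of multiple integrals, its variance---being the sum of the squared $L^2$-norms of its chaotic projections of order at least one---is bounded by a finite combination of the contraction norms $\|f_k^a\otimes_\ell f_k^b\|^2$, $a,b\in\{i,j\}$, multiplied by bounded factors $\|f_k^c\|_{H^{\otimes n_c}}$, hence tends to $0$. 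Together (i)--(ii) give $\Gamma_k\to I_d$ in $L^2(\Omega)$, and since the target covariance $I_d$ is nondegenerate I would conclude with the multivariate Malliavin--Stein inequality, which bounds a suitable probability distance between $F_k$ and $\cN_d(0,I_d)$ by a constant times $\big(\sum_{i,j}\bE[(\delta_{ij}-\Gamma_k^{ij})^2]\big)^{1/2}\to 0$.

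The hard part will be step (ii): controlling $\Var(\langle DF_k^i,DF_k^j\rangle_H)$ purely in terms of the contraction norms appearing in (N1). Each $DF_k^i$ is an $H$-valued multiple integral, so $\langle DF_k^i,DF_k^j\rangle_H$ must be re-expanded chaos by chaos through the product formula, and one then has to recognize that every chaotic projection of positive order has $L^2$-norm dominated by some $\|f_k^a\otimes_\ell f_k^b\|$, using the boundedness of $\|f_k^c\|_{H^{\otimes n_c}}$ (a consequence of $\bE[(F_k^c)^2]\to 1$) to absorb the factors that do not themselves vanish; this combinatorial bookkeeping, with its accompanying count of coefficients, is the genuine technical content and is exactly why the interpolation inequality of the previous step is indispensable. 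If one preferred to stay closer to the original argument and avoid Stein's method altogether, the alternative is to prove convergence of all joint moments of $F_k$ to the Gaussian ones directly through the diagram (Wick pairing) formula; the same contraction estimates reappear there as the mechanism that annihilates every non-Gaussian pairing, trading the Stein inequality for heavier combinatorics but adding no new conceptual difficulty.
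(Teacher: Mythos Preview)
The paper does not prove this theorem: it is quoted verbatim from \cite{NualartOrtiz2008} (as two of the six equivalent conditions in their Theorem~7) purely for the reader's convenience, and is invoked as a black box in the proof of Proposition~\ref{th:bdDecomTime}. There is therefore no proof in the paper to compare your proposal against.

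That said, your outline is a correct route to the result, though it is not the one taken in the original reference. Nualart and Ortiz-Latorre work via characteristic functions: they show that (N1) forces $\|DF_k^i\|_H^2\to n_i$ in $L^2(\Omega)$ (and the analogous cross statements), and then deduce joint Gaussianity by differentiating the characteristic function and integrating by parts with the divergence operator $\delta$, without invoking Stein's method. Your argument instead uses the later Nourdin--Peccati multivariate Malliavin--Stein bound, which packages the same $\Gamma_k\to I_d$ computation into a quantitative distance estimate; this is more streamlined and yields rates for free, at the cost of importing a result that postdates \cite{NualartOrtiz2008}. Either way the analytic core is identical --- the contraction-norm control of $\Var\langle DF_k^i,DF_k^j\rangle_H$ via the product formula and the interpolation inequality $\|f\otimes_r g\|^2\le\|f\otimes_{p-r}f\|\,\|g\otimes_{q-r}g\|$ --- and you have identified that correctly.
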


We conclude this section with a result used to obtain the exact rates of convergence of some estimators from Section~\ref{sec:FixedX}.

\begin{lemma}\label{lemma:keylimit}
	For any $x\in (0,\pi)$ and $\theta>0$, the following holds true
	\begin{align}\label{eq:keylimit}
	\lim_{n\rightarrow\infty}\sqrt{n}\sum_{k\ge 1}\frac{\sin^{2}(kx)}{k^{2}}\left(1-e^{-\theta k^{2}/n}\right)=\frac{\sqrt{\pi\theta}}{2}.
	\end{align}
\end{lemma}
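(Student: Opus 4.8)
The plan is to recognize the sum in \eqref{eq:keylimit} as a Riemann-sum-like object and to extract its leading-order behaviour as $n\to\infty$ via a careful asymptotic analysis. Write $S_n := \sum_{k\ge 1} \frac{\sin^2(kx)}{k^2}\bigl(1-e^{-\theta k^2/n}\bigr)$. The first step is to replace $\sin^2(kx)$ by its average value $1/2$: since $\sin^2(kx) = \tfrac12 - \tfrac12\cos(2kx)$, we split $S_n = \tfrac12 A_n - \tfrac12 B_n$, where
\begin{align}
A_n = \sum_{k\ge 1}\frac{1-e^{-\theta k^2/n}}{k^2}, \qquad B_n = \sum_{k\ge 1}\frac{\cos(2kx)}{k^2}\bigl(1-e^{-\theta k^2/n}\bigr).
\end{align}
I would then show that $\sqrt{n}\,B_n \to 0$ while $\sqrt{n}\,A_n \to \sqrt{\pi\theta}$, so that $\sqrt{n}\,S_n \to \tfrac12\sqrt{\pi\theta}$, as claimed.

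For the main term $A_n$, the idea is that $1 - e^{-\theta k^2/n}$ is $O(\theta k^2/n)$ for small $k$ (i.e. $k \ll \sqrt{n}$) and close to $1$ for large $k$, so the effective number of terms contributing is of order $\sqrt{n}$, yielding the $1/\sqrt{n}$ scale. Concretely, substitute $t = k/\sqrt{n}$ so that $\theta k^2/n = \theta t^2$ and $1/k^2 = 1/(n t^2)$; then $\sqrt{n}\,A_n = \frac{1}{\sqrt n}\sum_{k\ge 1} \frac{1-e^{-\theta k^2/n}}{(k/\sqrt n)^2}$, which is a Riemann sum with mesh $1/\sqrt n$ for the integral $\int_0^\infty \frac{1-e^{-\theta t^2}}{t^2}\,dt$. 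I would make this rigorous by a standard Riemann-sum convergence estimate (the integrand $\frac{1-e^{-\theta t^2}}{t^2}$ is continuous on $[0,\infty)$ with value $\theta$ at $t=0$, decays like $1/t^2$ at infinity, and is monotone on each of a finite number of pieces, so comparison of the sum with the integral gives an error $o(1)$). Finally, the integral is evaluated by integration by parts: $\int_0^\infty \frac{1-e^{-\theta t^2}}{t^2}\,dt = \bigl[-\frac{1-e^{-\theta t^2}}{t}\bigr]_0^\infty + \int_0^\infty \frac{2\theta t\, e^{-\theta t^2}}{t}\,dt = 2\theta \int_0^\infty e^{-\theta t^2}\,dt = 2\theta \cdot \frac{1}{2}\sqrt{\pi/\theta} = \sqrt{\pi\theta}$. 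This gives $\sqrt{n}\,A_n \to \sqrt{\pi\theta}$.

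For the oscillatory term $B_n$, the key point is that $x \in (0,\pi)$ is fixed, so $2x \notin 2\pi\mathbb{Z}$ and the partial sums $\sum_{k\le K}\cos(2kx)$ are bounded uniformly in $K$ by a constant $C_x$ (Dirichlet kernel bound). I would then apply Abel summation to $B_n = \sum_k \cos(2kx)\, a_k$ with $a_k := \frac{1-e^{-\theta k^2/n}}{k^2}$: since $(a_k)$ is eventually monotone decreasing to $0$ with total variation $\sum_k |a_{k+1}-a_k| = O(\max_k a_k) = O(1/n)$ — indeed $a_k \le \min(\theta/n,\,1/k^2)$, whose maximum over $k$ is $O(1/n)$ (attained near $k \asymp \sqrt n$) — Abel summation yields $|B_n| \le C_x \cdot O(1/n)$. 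Hence $\sqrt n\, B_n = O(n^{-1/2}) \to 0$.

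The main obstacle I anticipate is making the Riemann-sum approximation for $\sqrt n\, A_n$ fully rigorous with an explicit $o(1)$ error bound uniformly — one must handle both the region $k \lesssim \sqrt n$ (where the integrand is nearly constant) and the tail $k \gtrsim \sqrt n$ (where summability of $1/k^2$ controls the error) and confirm that the endpoint contribution and the non-monotone stretch of the integrand don't spoil convergence. A clean alternative to the Riemann-sum argument, which may be what the authors do, is to note $1 - e^{-\theta k^2/n} = \int_0^{\theta/n} k^2 e^{-k^2 s}\,ds$, interchange sum and integral (Tonelli, all terms positive), use $\sum_{k\ge 1} e^{-k^2 s} = \frac{1}{2}\bigl(\vartheta_3(0,e^{-s}) - 1\bigr) \sim \frac{1}{2}\sqrt{\pi/s}$ as $s\downarrow 0$ (the classical theta-function / Poisson-summation asymptotic), and then compute $\sqrt n \int_0^{\theta/n} \frac{1}{2}\sqrt{\pi/s}\,ds = \sqrt n \cdot \frac12 \sqrt\pi \cdot 2\sqrt{\theta/n} = \sqrt{\pi\theta}$; one would still need the $\sin^2$-to-$\tfrac12$ reduction, i.e. the $B_n$ estimate, but the hard analytic input is then packaged into the well-known theta asymptotic.
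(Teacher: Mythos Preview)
Your proposal is correct and follows essentially the same approach as the paper: split $\sin^2(kx)$ into its mean $\tfrac12$ plus an oscillatory piece, handle the main term by comparison with the integral $\int_0^\infty\frac{1-e^{-\theta t^2}}{t^2}\,dt=\sqrt{\pi\theta}$, and kill the oscillatory term via Abel summation using that $a_k=(1-e^{-\theta k^2/n})/k^2$ is monotone decreasing with $a_1\le\theta/n$. The only cosmetic differences are that the paper writes the oscillatory part as the telescoping $\sin((2k+1)x)-\sin((2k-1)x)$ (so summation by parts is immediate without invoking the Dirichlet bound) and makes the Riemann-sum step rigorous by an explicit upper/lower integral sandwich using the monotonicity of $(1-e^{-\epsilon y})/y$.
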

\begin{proof}
	Note that
	\begin{align}
	\sin^{2}(kx)=\frac12-\frac{\sin((2k+1)x)-\sin((2k-1)x)}{4\sin x},
	\end{align}
	and therefore,
	\begin{align}
	\sqrt{n}\sum_{k\ge 1}&\frac{\sin^{2}(kx)}{k^{2}}\left(1-e^{-\theta k^{2}/n}\right)\\
	&\sqrt{n}\sum_{k\ge 1}\frac1{2k^{2}}\left(1-e^{-\theta k^{2}/n}\right)-\sqrt{n}\sum_{k\ge 1}\frac{\sin((2k+1)x)-\sin((2k-1)x)}{4k^{2}\sin x}\left(1-e^{-\theta k^{2}/n}\right) \\
	& =: L_n^1 - L_n^2.
	\end{align}
	To prove \eqref{eq:keylimit}, we will show that $L^1_n\to \sqrt{\pi\theta}/2$, and $L^2_n\to0$.
	
	It is straightforward to check that for any $\varepsilon >0$, the function $(1-e^{- \epsilon x})/x$, $x>0$, is decreasing. It is also easy to show that
	\begin{align}
	\int_{0}^{\infty}\frac{1-e^{-z^{2}}}{z^{2}}dz=\sqrt{\pi}.
	\end{align}
	Using these, we obtain
	\begin{align}\label{eq:limit1upper}
	L_n^1 & =\sqrt{n}\sum_{k\ge 1}\int_{k-1}^{k}\frac1{2k^{2}}\left(1-e^{-\theta k^{2}/n}\right)\dif z \leq \sqrt{n}\sum_{k\ge 1}\int_{k-1}^{k}\frac1{2z^{2}}\left(1-e^{-\theta z^{2}/n}\right)\dif z \\
	&=\frac{\sqrt{n}}{2}\int_{0}^{\infty}\frac1{z^{2}}\left(1-e^{-\theta z^{2}/n}\right)\dif z
	=\frac{\sqrt{n}}{2}\int_{0}^{\infty}\frac1{y^{2}n/\theta}\left(1-e^{-y^{2}}\right)\dif y \sqrt{n/\theta}\\
	&=\frac{\sqrt{\theta}}{2}\int_{0}^{\infty}\frac1{y^{2}}\left(1-e^{-y^{2}}\right)\dif y
	=\frac{\sqrt{\pi\theta}}{2}.
	\end{align}
	On the other hand,
	\begin{align}\label{eq:limit1lower}
	L_n^1&=\sqrt{n}\sum_{k\ge 1}\int_{k}^{k+1}\frac1{2k^{2}}\left(1-e^{-\theta k^{2}/n}\right)\dif z
	\ge \sqrt{n}\sum_{k\ge 1}\int_{k}^{k+1}\frac1{2z^{2}}\left(1-e^{-\theta z^{2}/n}\right)\dif z\\
	&=\frac{\sqrt{n}}{2}\int_{1}^{\infty}\frac1{z^{2}}\left(1-e^{-\theta z^{2}/n}\right)\dif z
	=\frac{\sqrt{n}}{2}\int_{\sqrt{\theta/n}}^{\infty}\frac1{y^{2}n/\theta}\left(1-e^{-y^{2}}\right)\dif y \sqrt{n/\theta}\\
	&=\frac{\sqrt{\theta}}{2}\int_{\sqrt{\theta/n}}^{\infty}\frac1{y^{2}}\left(1-e^{-y^{2}}\right)\dif y \underset{n\to\infty}{\longrightarrow} \frac{\sqrt{\pi\theta}}{2}.
	\end{align}
	Combing \eqref{eq:limit1upper} and \eqref{eq:limit1lower}, we conclude that $L^1_n\to \sqrt{\pi\theta}/2$.
	
	Denote by
	\begin{align}
	f_{k}:=\frac{1-e^{-\theta k^{2}/n}}{k^{2}},\quad k\ge 1,
	\end{align}
	and as above, one can show that $\set{f_k,k\in\bN}$ is a decreasing sequence.
	By simple rearrangement of terms, we get
	\begin{align}
	L_2^n =\sqrt{n}\sum_{k\ge 2}\sin((2k-1)x)\left(f_{k-1}-f_{k}\right)-\sqrt{n}\sin xf_{1}.
	\end{align}
	Thus,
	\begin{align}
	|L_n^2|& \leq \sqrt{n}\sum_{k\ge 2}\Mid\sin((2k-1)x)\Mid\left(f_{k-1}-f_{k}\right)+\sqrt{n}\sin xf_{1}\\
	& \leq \sqrt{n}\sum_{k\ge 2}\left(f_{k-1}-f_{k}\right)+\sqrt{n}f_{1}
	\leq  2\sqrt{n}f_{1} = 2\sqrt{n}\left(1-e^{-\theta/n}\right)\\
	& \leq  2\sqrt{n}\frac{\theta}{n} = 2\frac{\theta}{\sqrt{n}} \underset{n\to\infty}{\longrightarrow} 0.
	\end{align}
	The proof is complete.
\end{proof}

\end{appendix}

\section*{Acknowledgments}
The authors would like to thank Prof. Robert C. Dalang and Prof. Sergey V. Lototsky for fruitful discussions that lead to some of the questions investigated in this manuscript. The authors are also grateful to the editors and the anonymous referee for their helpful comments and suggestions which
helped to improve the paper.

\bibliographystyle{alpha}
{\small

\begin{thebibliography}{CDVK19}

\bibitem[AES16]{AaziziEs-Sebaiy2016}
S.~Aazizi and K.~Es-Sebaiy.
\newblock Berry-{E}sseen bounds and almost sure {CLT} for the quadratic
  variation of the bifractional {B}rownian motion.
\newblock {\em Random Oper. Stoch. Equ.}, 24(1):1--13, 2016.

\bibitem[BM83]{BreuerMajor1983}
P.~Breuer and P.~Major.
\newblock Central limit theorems for nonlinear functionals of {G}aussian
  fields.
\newblock {\em J. Multivariate Anal.}, 13(3):425--441, 1983.

\bibitem[BT17]{BibingerTrabs2017}
M.~Bibinger and M.~Trabs.
\newblock Volatility estimation for stochastic {PDE}s using high-frequency
  observations.
\newblock {\em Preprint, arXiv:1710.03519}, 2017.

\bibitem[BT19]{BibingerTrabs2019}
M.~Bibinger and M.~Trabs.
\newblock On central limit theorems for power variations of the solution to the
  stochastic heat equation.
\newblock {\em Preprint, arXiv:1901.01026}, 2019.

\bibitem[CDVK19]{CialencoDelgado-VencesKim2019}
I.~Cialenco, F.~Delgado-Vences, and H.-J. Kim.
\newblock Drift estimation for discretely sampled {SPDEs}.
\newblock {\em Preprint, arXiv:1904.10884}, 2019.

\bibitem[CGH11]{IgorNathanAditiveNS2010}
I.~Cialenco and N.~Glatt-Holtz.
\newblock Parameter estimation for the stochastically perturbed
  {N}avier-{S}tokes equations.
\newblock {\em Stochastic Process. Appl.}, 121(4):701--724, 2011.

\bibitem[CGH18]{CialencoGongHuang2016}
I.~Cialenco, R.~Gong, and Y.~Huang.
\newblock Trajectory fitting estimators for {SPDE}s driven by additive noise.
\newblock {\em Statistical Inference for Stochastic Processes}, 21(1):1--19,
  2018.

\bibitem[Cho07]{ChowBook}
P.~Chow.
\newblock {\em Stochastic partial differential equations}.
\newblock Chapman \& Hall/CRC Applied Mathematics and Nonlinear Science Series.
  Chapman \& Hall/CRC, Boca Raton, FL, 2007.

\bibitem[Cho19]{Chong2019}
C.~Chong.
\newblock High-frequency analysis of parabolic stochastic {PDE}s.
\newblock {\em Forthcoming in Ann. Statist.}, 2019.

\bibitem[Cia18]{Cialenco2018}
I.~Cialenco.
\newblock Statistical inference for {SPDE}s: an overview.
\newblock {\em Statistical Inference for Stochastic Processes}, 21(2):309--329,
  2018.

\bibitem[CNW06]{CorcueraNualartWoerner2006}
J.~M. Corcuera, D.~Nualart, and J.~H.~C. Woerner.
\newblock Power variation of some integral fractional processes.
\newblock {\em Bernoulli}, 12(4):713--735, 2006.

\bibitem[Cor12]{Corcuera2012}
J.~M. Corcuera.
\newblock New central limit theorems for functionals of {G}aussian processes
  and their applications.
\newblock {\em Methodol. Comput. Appl. Probab.}, 14(3):477--500, 2012.

\bibitem[Kho14]{Khoshnevisan2014BookSBMS}
D.~Khoshnevisan.
\newblock {\em Analysis of stochastic partial differential equations}, volume
  119 of {\em CBMS Regional Conference Series in Mathematics}.
\newblock the American Mathematical Society, Providence, RI, 2014.

\bibitem[LR17]{LototskyRozovsky2017Book}
S.~V. Lototsky and B.~L. Rozovsky.
\newblock {\em Stochastic partial differential equations}.
\newblock Universitext. Springer International Publishing, 2017.

\bibitem[Mar03]{Markussen2003}
B.~Markussen.
\newblock Likelihood inference for a discretely observed stochastic partial
  differential equation.
\newblock {\em Bernoulli}, 9(5):745--762, 2003.

\bibitem[NOL08]{NualartOrtiz2008}
D.~Nualart and S.~Ortiz-Latorre.
\newblock Central limit theorems for multiple stochastic integrals and
  malliavin calculus.
\newblock {\em Stochastic Processes and their Applications}, 118(4):614 -- 628,
  2008.

\bibitem[Nou08]{Nourdin2008}
I.~Nourdin.
\newblock Asymptotic behavior of weighted quadratic and cubic variations of
  fractional {B}rownian motion.
\newblock {\em Ann. Probab.}, 36(6):2159--2175, 2008.

\bibitem[Nua06]{Nualart2006}
D.~Nualart.
\newblock {\em The {M}alliavin calculus and related topics}.
\newblock Probability and its Applications (New York). Springer-Verlag, Berlin,
  second edition, 2006.

\bibitem[PR97]{PiterbargRozovskii1997}
L.~I. Piterbarg and B.~L. Rozovskii.
\newblock On asymptotic problems of parameter estimation in stochastic {PDE}'s:
  discrete time sampling.
\newblock {\em Math. Methods Statist.}, 6(2):200--223, 1997.

\bibitem[PT07]{PospivsilTribe2007}
J.~Posp{\'\i}{\v s}il and R.~Tribe.
\newblock Parameter estimates and exact variations for stochastic heat
  equations driven by space-time white noise.
\newblock {\em Stoch. Anal. Appl.}, 25(3):593--611, 2007.

\bibitem[Wal81]{walsh1981}
J.~B. Walsh.
\newblock A stochastic model of neural response.
\newblock {\em Adv. in Appl. Probab.}, 13(2):231--281, 1981.

\bibitem[Wal86]{Walsh}
J.~B. Walsh.
\newblock An introduction to stochastic partial differential equations.
\newblock In {\em \'Ecole d'\'et\'e de probabilit\'es de Saint-Flour,
  XIV---1984}, volume 1180 of {\em Lecture Notes in Math.}, pages 265--439.
  Springer, Berlin, 1986.

\end{thebibliography}
\def\cprime{$'$}

}

\end{document}